\newtheorem{theorem}{{\bf Theorem}}
\newtheorem{corollary}[theorem]{{\bf Corollary}}
\newtheorem{proposition}[theorem]{{\bf Proposition}}
\newtheorem{lemma}[theorem]{{\bf Lemma}}
\newcommand\lrb[1] {\llbracket #1 \rrbracket}
\def\bfa{\boldsymbol{a}}
\def\bfb{\boldsymbol{b}}
\def\bfj{\boldsymbol{j}}
\def\bfx{\boldsymbol{x}}
\def\bfy{\boldsymbol{y}}
\def\RR{\mathbb{R}}
\def\CCC{\mathcal{C}}
\begin{document}

\begin{frontmatter}
\title{On the Problem of Detecting When Two Implicit Plane Algebraic Curves Are Similar}


\author[a]{Juan Gerardo Alc\'azar\fnref{proy,proy2}}
\ead{juange.alcazar@uah.es}
\author[b]{Gema M. Diaz-Toca\fnref{proy2} }
\ead{gemadiaz@um.es}
 \author[a]{Carlos Hermoso}
\ead{carlos.hermoso@uah.es}

\address[a]{Departamento de F\'{\i}sica y Matem\'aticas, Universidad de Alcal\'a,
E-28871 Madrid, Spain}
\address[b]{Departamento de Matem\'atica Aplicada, Universidad de Murcia,  E-30100 Murcia, Spain}

\fntext[proy]{
Member of the Research Group {\sc asynacs} (Ref. {\sc ccee2011/r34}) }
\fntext[proy2]{
Supported by the Spanish ``Ministerio de
Econom\'ia y Competitividad" under the Project MTM2014-54141-P. }


\begin{abstract}
We make use of the complex implicit representation in order to provide a deterministic algorithm for checking whether or not two implicit algebraic curves are related by a similarity, a central question in Pattern Recognition and Computer Vision. The algorithm has been implemented in the computer algebra system Maple 2015. Examples  and evidence of the good practical performance of the algorithm are given. 
\end{abstract}

\end{frontmatter}

\section{Introduction}\label{section-introduction}

\noindent A central problem in Pattern Recognition is to classify a given object. If the object to be studied is represented as a curve, the problem reduces to comparing this curve with other curves, previously classified and stored in a database. However, the position and size of the curves in the database do not necessarily coincide with the position and size of the curve to be classified. Therefore, from a geometric point of view we need to check whether or not there exists some \emph{similarity}, i.e. the composition of a rigid motion and a scaling, transforming the given curve into some curve in the database. In Computer Vision we find a close problem, called \emph{pose estimation}, which has to do with identifying a certain object in a scene. Geometrically, the problem is equivalent to recognising a same object in two different positions. In turn, this amounts to checking if two objects are the same up to the composition of a translation and a rotation, so that the scaling factor does not change. 

These questions and related problems have been attacked in the applied mathematics literature using very different approaches: B-splines \cite{Huang}, Fourier descriptors \cite{Sener}, complex representations \cite{TC00}, M\"oebius transformations in the parameter space \cite{AHM13}, statistics \cite{Gal, Lei, Mishra} (also in the 3D case), moments \cite{Suk1, Taubin2}, geometric invariants \cite{Unel, WU98a, WU98b}, Newton-Puiseux parametrizations \cite{MM02}, or differential invariants \cite{Boutin, Calabi, Weiss}. The interested reader may consult the bibliographies in these papers to find other references on the matter; the list is really very long. 

In the above references the input is typically a ``fuzzy" image, possibly noisy, with occluded parts, etc. and quite often a point cloud. A common strategy is to adjust first an algebraic curve to the input, so that the comparison is carried out between algebraic curves. Although in some cases (B-splines \cite{Huang}, Fourier descriptors \cite{Sener}) the curves are described by parametric representations, in most approaches the curves are represented by means of implicit algebraic equations. Therefore, the question of detecting similarity between implicit algebraic curves comes up.

Since in the given references the input is usually noisy and the representation of the object as an algebraic curve is only an approximation, the comparison made between the curves is also approximate. In particular, statistical and numerical methods are massively employed. However, in this paper we address the problem from a different perspective. We assume that the curves are given in \emph{exact} arithmetic, and we provide a deterministic, not approximate, answer to the question whether or not these two curves are similar. In the affirmative case, our algorithm can also find the similarities transforming one into the other. The motivation for this point of view comes from computer algebra systems. Assume that a database of classical curves is stored in your favourite computer algebra system. Using the algorithms in this paper, the system can recognise a certain curve introduced by the user as one of the curves in the database. This way, the user can identify a curve as, say, a cardioid, a lemniscata, an epitrochoid, a deltoid, etc.

In order to do this, we use the \emph{complex representations} of the curves to be compared. Complex representations have already been used in \cite{TC00, UW}, where the pose-estimation problem is addressed, and in \cite{LR08}, where the computation of the symmetries of an algebraic planar curve is studied. In \cite{TC00} the complex representation is exploited and combined with numerical strategies in order to give an approximate solution to the problem. In \cite{UW} the transformation to be sought is decomposed into a rotation, which is approached in a deterministic way, and a translation, which is approximated. The approach in \cite{LR08} is deterministic, but they treat a different problem. In our case, we use the complex representation of the curves to translate the problem into a bivariate polynomial system of equations with complex coefficients. Additionally, except in certain special cases we can also add a univariate equation to the system (corresponding to the rotation angle), which allows to solve the system in a fast and efficient way.

Besides complex representations, we also employ usual tools in the field of computer algebra, like resultants, gcds and polynomial system solving. The resulting algorithm has been implemented and tested in the computer algebra system Maple 2015. Our experiments confirm that the algorithm is efficient and fast. 

The structure of the paper is the following. Preliminaries are introduced in Section \ref{sec-prelim}. In Section \ref{sec-main} we discuss the two cases that must be distinguished. The most general case is addressed in Section \ref{sec-general}, and a special case is studied in Section \ref{sec-special}. Both in Section \ref{sec-general} and Section \ref{sec-special} we focus on orientation-preserving similarities. Although the orientation-reversing case is analogous, some observations for this case are provided in Section \ref{sec-or-reverse}. Finally, examples and experimentation details are provided in Section \ref{sec-impl}.

\section{Preliminaries} \label{sec-prelim}
\noindent Throughout the paper, we consider two plane, real, algebraic curves $\CCC_1,\CCC_2 \subset \RR^2$ implicitly defined by two real, irreducible polynomials $f(x,y), g(x,y)$ of the same degree $n$ with rational coefficients. Furthermore, we will write
\[f(x,y)=f_n(x,y)+f_{n-1}(x,y)+\cdots +f_0(x,y),\]where for $p=0,1,\ldots,n$, $f_p(x,y)$ represents the homogeneous form of degree $p$ of $f$,
\[f_p(x,y)=\sum_{j=0}^p a_{p-j,j}x^{p-j}y^j.\]Similarly,
\[g(x,y)=g_n(x,y)+g_{n-1}(x,y)+\cdots +g_0(x,y), \mbox{   }g_p(x,y)=\sum_{j=0}^p b_{p-j,j}x^{p-j}y^j.\]
We will assume that $\CCC_1,\CCC_2 \subset \RR^2$ are {\it real} curves, i.e. that they contain infinitely many real points, and that they are not either lines or circles. We have found it useful to identify the Euclidean plane with the complex plane. Through this correspondence, $(x,y) \simeq z=x + iy$ and
\begin{equation}\label{eq:complexchange}
x=\frac{z+\bar{z}}{2},\mbox{ }y=\frac{z-\bar{z}}{2i},
\end{equation}
where $\bar{z}$ denotes the conjugate of the complex number $z$. By performing the change \eqref{eq:complexchange} into the implicit equation $f(x,y)$, we reach a polynomial $F(z,\bar{z})$ that can be regarded as a ``complex" implicit equation of $\CCC_1$, since it is, as it also happens with the usual implicit equation, unique up to multiplication by (possibly complex) constants. We will refer to $F(z,\bar{z})$ as the \emph{complex implicit representation} of ${\mathcal C}_1$. The polynomial $F(z,\bar{z})$ can be written as
\[
F(z,\bar{z})=F_n(z,\bar{z})+F_{n-1}(z,\bar{z})+\cdots +F_0(z,\bar{z}),
\]
where $F_p(z,\bar{z})$, $p=0,1,\ldots,n$, is an homogeneous polynomial of degree $p$ in the variables $z,\bar{z}$, i.e. for $p\geq 1$,
\[
F_p(z,\bar{z})=\sum_{j=0}^p \alpha_{p-j,j}z^{p-j}\bar{z}^j. 
\]
Furthermore, one can check that for $p=0,1,\ldots,n$, $f_p(x,y)$ gives rise to $F_p(z,\bar{z})$ and conversely. We will say that a term in $F(z,\bar{z})$ has \emph{bidegree} $(n-k,k)$ if it can be written as $\xi\cdot z^{n-k}\bar{z}^k$, with $\xi\in {\Bbb C}$. In a similar way, by applying \eqref{eq:complexchange} into $g(x,y)$ we reach $G(z,\bar{z})$, where
\[
G(z,\bar{z})=G_n(z,\bar{z})+G_{n-1}(z,\bar{z})+\cdots +G_0(z,\bar{z}),\mbox{   }G_p(z,\bar{z})=\sum_{j=0}^p \beta_{p-j,j}z^{p-j}\bar{z}^j. 
\]

 An \emph{(affine) similarity} of the plane is a linear affine map from the plane to itself that preserves ratios of distances. More precisely, a map $h: \RR^2 \longrightarrow \RR^2$ is a similarity if $h(\bfx) = A\bfx + b$ for an invertible matrix $A\in \RR^{2\times 2}$ and a vector $b\in \RR^2$, and there exists $\textsc{r} > 0$ such that
\[ \|h(\bfx) - h(\bfy)\|_2 = \textsc{r}\cdot \|\bfx - \bfy\|_2,\qquad \bfx,\bfy\in \RR^2, \]
where $\|\cdot\|_2$ denotes the Euclidean norm. We refer to $\textsc{r}$ as the \emph{ratio} of the similarity. Notice that if $\textsc{r} = 1$ then $h$ is an \emph{(affine) isometry}, i.e. it preserves distances. Planar isometries are completely classified \cite{Coxeter}. Under the composition operation, similarities form a group, and isometries form a subgroup inside of this group. Furthermore, any similarity can be decomposed into an isometry and a uniform scaling by its ratio $\textsc{r}$. We say that $\CCC_1,\CCC_2$ are similar if and only if there exists a similarity $h$ such that $h(\CCC_1)=\CCC_2$. Notice that if $\CCC_1,\CCC_2$ are similar, since by hypothesis they do not have multiple components their degrees must coincide. This explains why we assumed the equality of the degrees of $\CCC_1,\CCC_2$ at the beginning of the paper. Furthermore, whenever $\CCC_1,\CCC_2$ are not lines or circles the number of similarities between them is finite \cite{AHM13}; in fact, if $\CCC_1,\CCC_2$ are not symmetric then there exists at most one similarity mapping one onto the other \cite{AHM13}.

A similarity can either preserve or reverse the orientation. In the first case we say that it is \emph{orientation-preserving}, and in the second case we say that it is \emph{orientation-reversing}. By identifying ${\Bbb R}^2$ and ${\Bbb C}$, any orientation-preserving similarity can be written as $h(z)=\bfa z+\bfb$, where $\bfa,\bfb\in {\Bbb C}$. Furthermore, any orientation-reversing similarity can be written as $h(z) = \bfa \overline{z} + \bfb$. In each case, its ratio $\textsc{r} = |\bfa|\neq 0$. The next result shows the relationship between the complex implicit equations of two similar curves $\CCC_1,\CCC_2$.

\begin{theorem}\label{relat}
Let $\CCC_1,\CCC_2$ be two algebraic curves with complex implicit equations $F(z,\bar{z})$ and $G(z,\bar{z})$ with the same degree. The curves $\CCC_1,\CCC_2$ are related by an orientation-preserving similarity $h(z)=\bfa z +\bfb$, if and only if there exists $\lambda\in {\Bbb C}$, $\lambda\neq 0$ such that 
\begin{equation}\label{eq:eqimplicits-1}
G(\bfa z+\bfb,\overline{\bfa}\bar{z}+\overline{\bfb})=\lambda \cdot F(z,\bar{z})
\end{equation}
for all $z\in {\Bbb C}$. Also, $\CCC_1,\CCC_2$ are related by an orientation-reversing similarity $h(z)=\bfa \bar{z} +\bfb$, if and only if there exists $\lambda $ such that 
\begin{equation}\label{eq:eqimplicits-2}
G(\bfa \bar{z}+\bfb,\overline{\bfa}z+\overline{\bfb})=\lambda \cdot F(z,\bar{z})
\end{equation}
for all $z\in {\Bbb C}$.
\end{theorem}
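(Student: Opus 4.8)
The plan is to translate the geometric condition $h(\CCC_1)=\CCC_2$ into the polynomial identity \eqref{eq:eqimplicits-1}, and the core difficulty is to upgrade an equality of zero sets into an equality of polynomials up to a nonzero constant. I will argue the orientation-preserving case; the orientation-reversing case \eqref{eq:eqimplicits-2} is identical after replacing $h(z)=\bfa z+\bfb$ by $h(z)=\bfa\bar z+\bfb$, whose conjugate is $\overline{\bfa}\,z+\overline{\bfb}$.

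\emph{Forward implication.} Assume $h(\CCC_1)=\CCC_2$ with $h(z)=\bfa z+\bfb$ and $\bfa\neq 0$. The key observation is that a real point of $\CCC_1$, written $z=x+iy$, is sent to the real point $w=\bfa z+\bfb$ of $\CCC_2$, whose conjugate is $\bar w=\overline{\bfa}\,\bar z+\overline{\bfb}$; since membership in $\CCC_2$ means $G(w,\bar w)=0$, the polynomial
\[
H(z,\bar z):=G(\bfa z+\bfb,\overline{\bfa}\,\bar z+\overline{\bfb})
\]
vanishes at every one of the infinitely many real points of $\CCC_1$. Viewing $h$ as a real affine map $\bfx\mapsto A\bfx+b$ with $A$ invertible, $H$ is exactly the complex implicit representation of the real polynomial $g\circ h$, which has degree $n$ and vanishes on $h^{-1}(\CCC_2)=\CCC_1$, the zero set of $f$. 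Because $f$ is irreducible and $\CCC_1$ has infinitely many real points, $f$ must divide $g\circ h$; comparing degrees gives $g\circ h=c\,f$ with $c\neq 0$, and applying the change \eqref{eq:complexchange} yields \eqref{eq:eqimplicits-1} with a nonzero $\lambda$ (which may be complex owing to the freedom in normalizing $F$ and $G$).

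\emph{Converse.} Suppose \eqref{eq:eqimplicits-1} holds for all $z\in\CC$. Since $z=x+iy$, $\bar z=x-iy$ is an invertible linear change of coordinates, such an identity holds pointwise; hence $F(z,\bar z)=0$ forces $G(\bfa z+\bfb,\overline{\bfa}\,\bar z+\overline{\bfb})=0$, i.e. $h(z)\in\CCC_2$, so $h(\CCC_1)\subseteq\CCC_2$. Using $\lambda\neq 0$ together with the invertibility of $h$ gives the reverse inclusion, whence $h(\CCC_1)=\CCC_2$ and $\CCC_1,\CCC_2$ are similar by the similarity $h$.

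\emph{Main obstacle.} I expect the delicate step to be the passage from ``$H$ vanishes on $\CCC_1$'' to ``$H=\lambda F$'': this is exactly where the irreducibility of $f$ and the hypothesis that $\CCC_1$ contains infinitely many real points are indispensable, for otherwise a polynomial vanishing on the curve need not be a constant multiple of $f$. One must also take care to read \eqref{eq:eqimplicits-1} as a formal polynomial identity, equivalently a pointwise one for all $z\in\CC$, which is what legitimizes the degree and divisibility bookkeeping.
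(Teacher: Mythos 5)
Your proof is correct and takes essentially the same route as the paper: the forward direction rests on the same Bezout-type argument (infinitely many common points, irreducibility of $f$, and equal degrees forcing proportionality), merely carried out for the real polynomials $f$ and $g\circ h$ rather than directly for the complex curves $F(z,\bar{z})=0$ and $G(\bfa z+\bfb,\overline{\bfa}\bar{z}+\overline{\bfb})=0$. The only noticeable variation is in the converse, where you obtain the reverse inclusion $\CCC_2\subseteq h(\CCC_1)$ from $\lambda\neq 0$ and the invertibility of $h$, whereas the paper argues that $h(\CCC_1)$ is an algebraic curve contained in $\CCC_2$ of the same degree; both arguments are sound.
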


\begin{proof} We prove the result for the orientation-preserving case. Similarly for the orientation-reversing case. Here we identify each point $(x,y)\in {\Bbb R}^2$ with the complex number $z=x+iy$. $(\Rightarrow)$ If $\CCC_1,\CCC_2$ are related by an orientation-preserving similarity $h(z)=\bfa z +\bfb$, then for every point $z\in \CCC_1$ the point $\omega=h(z)$ belongs to $\CCC_2$, i.e. $G(\omega,\overline{\omega})=0$. Since $\omega=\bfa z +\bfb$ the curves defined by $F(z,\bar{z})=0$ and $G(\bfa z+\bfb,\overline{\bfa}\bar{z}+\overline{\bfb})=0$ have infinitely many points in common. By Bezout's Theorem and since they have the same degree, \eqref{eq:eqimplicits-1} follows. $(\Leftarrow)$ If \eqref{eq:eqimplicits-1} holds then whenever $z\in \CCC_1$, i.e. whenever $F(z,\bar{z})=0$, then $G(\bfa z+\bfb,\overline{\bfa}\bar{z}+\overline{\bfb})=0$, i.e. $h(z)$ is a point of $\CCC_2$. Therefore, the image $h(\CCC_1)$, which is an algebraic curve, must be a component of $\CCC_2$. Since $\CCC_2$ and $h(\CCC_1)$ have the same degree $\CCC_2=h(\CCC_1)$.
\end{proof}

Notice that in the orientation-preserving case, $h(z)=\bfa z + \bfb$ is the composition of: (1) a rotation around the origin, of angle equal to $\theta$, where $\bfa=|\bfa|\cdot e^{i\theta}$; (2) a scaling, where the scaling factor is $|\bfa|$ and the center is the origin; (3) a translation of vector $\bfb$. In the orientation-reversing case, $h(z) = \bfa \overline{z} + \bfb$ is the composition of: (1) a symmetry with respect to the $x$-axis; (2) a rotation around the origin, of angle equal to $\theta$, where $\bfa=|\bfa|\cdot e^{i\theta}$; (3) a scaling, where the scaling factor is $|\bfa|$ and the center is the origin; (4) a translation of vector $\bfb$.

\section{Similarity detection: discussion of cases.} \label{sec-main}

The equations \eqref{eq:eqimplicits-1} and \eqref{eq:eqimplicits-2}, jointly with the conditions $\bfa,\lambda\neq 0$ are necessary and sufficient for $\CCC_1,\CCC_2$ to be similar.
Equaling the terms in $z^{m-j}\bar{z}^j$, $0\leq j\leq m$, $0\leq m\leq n$ at both sides of \eqref{eq:eqimplicits-1}
leads to a polynomial system ${\mathcal S}$ of at most $N={n+2 \choose 2}$ equations with complex coefficients in the variables $\bfa,\overline{\bfa},\bfb,\overline{\bfb}, \lambda$. The solutions to that system with $\bfa,\lambda\neq 0$ correspond to the similarities between $\CCC_1,\CCC_2$ preserving orientation. Similarly, by considering \eqref{eq:eqimplicits-2} we reach another polynomial system ${\mathcal S}'$ whose solutions correspond to the similarities reversing orientation. Thus, $\CCC_1,\CCC_2$ are similar if and only if either ${\mathcal S}$ or ${\mathcal S}'$ is consistent and has a solution with $\bfa,\lambda\neq 0$. In the sequel, we will show how to efficiently check the consistency of ${\mathcal S}$. For ${\mathcal S}'$ the analysis is similar; we will make some observations on this case in Section \ref{sec-or-reverse}. 

Since $\CCC_1$ has degree $n$, there exists $j\in \{0,1,\ldots,n\}$ such that $\alpha_{n-j,j}\neq 0$. Let us see that we can always find $j\neq n$ with this property. We need the following lemma. This result is mentioned without a proof in \cite{TC00} and \cite{LR08}, so for completeness we include a proof here.                      

\begin{lemma} \label{lem-alphas}
For $j\in \{0,1,\ldots,n\}$ we have $\alpha_{n-j,j}=\overline{\alpha}_{j,n-j}$, $\beta_{n-j,j}=\overline{\beta}_{j,n-j}$.
\end{lemma}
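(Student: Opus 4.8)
The plan is to exploit the single hypothesis that $f(x,y)$ has real coefficients, which forces a conjugation symmetry on $F(z,\bar z)$. The crucial preliminary observation is that the substitution \eqref{eq:complexchange}, together with its inverse $z=x+iy$, $\bar z=x-iy$, is an \emph{invertible linear} change between the pairs of variables $(z,\bar z)$ and $(x,y)$. Consequently, two polynomials in $z,\bar z$ with complex coefficients determine the same function of the real variables $(x,y)$ if and only if they coincide as formal polynomials in $z,\bar z$. This is the device that will let me upgrade an identity of functions into an identity of coefficients, and hence read off the relations between the $\alpha$'s.

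Next I would compute the effect of complex conjugation on $F$. For a genuine conjugate pair one has $\overline{z}=\bar z$ and $\overline{\bar z}=z$, so that
\[
\overline{F(z,\bar z)}=\sum_{p}\sum_{j=0}^{p}\overline{\alpha_{p-j,j}}\,z^{j}\bar z^{\,p-j};
\]
in words, conjugating $F$ amounts to conjugating every coefficient and interchanging the exponents of $z$ and $\bar z$ in each monomial. On the other hand, since $f$ has real coefficients and $(x,y)\in\RR^2$, we have $\overline{F(z,\bar z)}=\overline{f(x,y)}=f(x,y)=F(z,\bar z)$. Thus the displayed expression and $F(z,\bar z)$ agree as functions of real $(x,y)$.

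Invoking the change-of-variables bijection from the first step, these two expressions must then be equal as formal polynomials in $z,\bar z$, and I would finish by comparing coefficients. The coefficient of $z^{p-j}\bar z^{j}$ is $\alpha_{p-j,j}$ in $F$, whereas in the conjugated expression it is obtained from the summand whose exponent of $z$ equals $p-j$, namely $\overline{\alpha_{j,p-j}}$. This yields $\alpha_{p-j,j}=\overline{\alpha_{j,p-j}}$ for every $p$ and $j$; specialising to $p=n$ gives the first claimed identity, and the identical argument applied to $g$ and $G$ gives the statement for the $\beta$'s.

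The only delicate point is the justification, in the first step, that coincidence of the two expressions as functions on the real plane forces equality of their coefficients viewed as polynomials in the formal variables $z,\bar z$; this is precisely where the invertibility of the linear change \eqref{eq:complexchange} is indispensable, since the conjugation identities $\overline{z}=\bar z$, $\overline{\bar z}=z$ are only valid when $\bar z$ is the true conjugate of $z$, so one cannot treat $z$ and $\bar z$ as genuinely independent during the conjugation. Once that passage is secured, everything else is routine bookkeeping of exponents.
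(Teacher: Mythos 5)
Your proof is correct, but it follows a genuinely different route from the paper's. The paper argues by brute-force computation: it substitutes \eqref{eq:complexchange} into each monomial $a_{n-k,k}x^{n-k}y^k$, expands via the binomial theorem to obtain the explicit formulas \eqref{eq:alpha} and \eqref{eq:alpha-2} for $\alpha_{n-j,j}$ and $\alpha_{j,n-j}$ as sums over $k$, and then compares them term by term, observing that the contributions with $k$ even are equal real numbers while those with $k$ odd are opposite purely imaginary numbers. You instead exploit the structural fact that $f$ real forces $F$ to be real-valued, so $F$ coincides \emph{as a function} on $\RR^2$ with its coefficient-conjugated, exponent-swapped counterpart, and then you upgrade this functional identity to an identity of formal polynomials in $z,\bar z$ via the invertibility of the linear change \eqref{eq:complexchange} together with the fact that a polynomial vanishing on all of $\RR^2$ is the zero polynomial; comparing coefficients then yields the Hermitian symmetry $\alpha_{p-j,j}=\overline{\alpha}_{j,p-j}$ at once, for every homogeneous degree $p$, not just $p=n$. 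You also correctly isolate the one delicate point — that $z$ and $\bar z$ cannot be treated as independent variables while conjugating, which is exactly why the identity principle is needed — and resolve it properly. What each approach buys: yours is shorter, essentially computation-free, and makes the underlying reason (real-valuedness equals Hermitian symmetry of the coefficient array) transparent; the paper's computation is heavier but produces the explicit expression \eqref{eq:alpha} of the $\alpha$'s in terms of the original real coefficients $a_{n-k,k}$, which is information your argument does not provide.
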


\begin{proof} We prove the result for the $\alpha$'s; the proof for the $\beta$'s is analogous. Performing the change \eqref{eq:complexchange} in each term $a_{n-k,k}x^{n-k}y^k$, where $k\in \{0,1,\ldots,n\}$, we get
\[
\begin{array}{l}
a_{n-k,k}\left(\frac{z+\bar{z}}{2}\right)^{n-k}\cdot \left(\frac{z-\bar{z}}{2i}\right)^k=\\
=\frac{a_{n-k,k}}{2^n \cdot i^k}\cdot \left[\sum_{j_1=0}^{n-k}{n-k \choose j_1}z^{n-k-j_1}\cdot \bar{z}^{j_1}\right]\cdot \left[\sum_{j_2=0}^k {k \choose j_2} (-1)^{j_2} z^{k-j_2}\cdot \bar{z}^{j_2}\right].
\end{array}
\]If we expand this multiplication, whenever $j_1+j_2=j$ we obtain a term in $z^{n-j}\bar{z}^j$; therefore,
\begin{equation}\label{eq:alpha}
\alpha_{n-j,j}=\sum_{k=0}^n \frac{a_{n-k,k}}{2^n\cdot i^k}\cdot\left[ \sum_{\begin{array}{c}j_1+j_2=j\\ j_1\leq n-k,j_2\leq k \end{array}} {n-k \choose j_1}{k \choose j_2} (-1)^{j_2}\right].
\end{equation}
Similarly, in order to find $\overline{\alpha}_{j,n-j}$ we substitute \eqref{eq:complexchange} into $a_{n-k,k}x^{n-k}y^k$, but we proceed in a slightly different way:
\[
\begin{array}{l}
a_{n-k,k}\left(\frac{\bar{z}+z}{2}\right)^{n-k}\cdot \left(\frac{-\bar{z}+z}{2i}\right)^k=\\
=\frac{a_{n-k,k}}{2^n \cdot i^k}\cdot \left[\sum_{\ell_1=0}^{n-k}{n-k \choose \ell_1}\bar{z}^{n-k-\ell_1}\cdot z^{\ell_1}\right]\cdot \left[\sum_{\ell_2=0}^k {k \choose \ell_2} (-1)^{\ell_2} \bar{z}^{k-\ell_2}\cdot z^{\ell_2}\right].
\end{array}
\]When $\ell_1+\ell_2=j$ we get $z^j\bar{z}^{n-j}$; hence,
\begin{equation}\label{eq:alpha-2}
\alpha_{j,n-j}=\sum_{k=0}^n \frac{a_{n-k,k}\cdot(-1)^k}{2^n\cdot i^k}\cdot\left[ \sum_{\begin{array}{c}\ell_1+\ell_2=j\\ \ell_1\leq n-k,\ell_2\leq k \end{array}} {n-k \choose \ell_1}{k \choose \ell_2} (-1)^{-\ell_2}\right].
\end{equation}
Finally, comparing the terms in \eqref{eq:alpha} and \eqref{eq:alpha-2} we notice that when $k$ is even we get the same real number, and when $k$ is odd we get two opposite purely imaginary numbers. So the result follows.
\end{proof}

If $\alpha_{0,n}\neq 0$ then from Lemma \ref{lem-alphas} we have that $\alpha_{n,0}\neq 0$ too; therefore there always exists $j\in \{0,1,\ldots,n-1\}$ such that $\alpha_{n-j,j}\neq 0$. Now if $\alpha_{n-j,j}\neq 0$ and $\beta_{n-j,j}=0$, the equality \eqref{eq:eqimplicits-1} cannot be satisfied, and therefore ${\mathcal C}_1$, ${\mathcal C}_2$ are not related by an orientation-preserving similarity. Thus, in the sequel whenever $\alpha_{n-j,j}\neq 0$ we will assume $\beta_{n-j,j}\neq 0$ too. Since
\[
G(\bfa z + \bfb,\overline{\bfa}\bar{z}+\overline{\bfb})=\cdots+\beta_{n-j,j}(\bfa z+\bfb)^{n-j}(\overline{\bfa}\bar{z}+\overline{\bfb})^j+\cdots,
\]
by equaling the coefficients of $z^{n-j}\bar{z}^{j}$ at both sides of \eqref{eq:eqimplicits-1}, we get
\begin{equation}\label{eq:lamda}
\lambda=\frac{\beta_{n-j,j}\bfa^{n-j}\overline{\bfa}^{j}}{\alpha_{n-j,j}}.
\end{equation}

\noindent Now let us compare the coefficients of $z^{n-j-1}\bar{z}^j$ at both sides of \eqref{eq:eqimplicits-1}. In order to find the coefficient of $z^{n-j-1}\bar{z}^{\bfj}$ in $G(\bfa z + \bfb,\overline{\bfa}\bar{z}+\overline{\bfb})$, we observe that the terms of $G(z,\bar{z})$ contributing to the term $z^{n-j-1}\bar{z}^{j}$ in $G(\bfa z + \bfb,\overline{\bfa}\bar{z}+\overline{\bfb})$ are the terms with bidegrees $(n-j,j)$, $(n-j-1,j+1)$ and $(n-j-1,j)$. After substituting $z\to \bfa z+\bfb$ in these terms, the coefficient of $z^{n-j-1}\bar{z}^{j}$ at the left hand-side of \eqref{eq:eqimplicits-1} is  
\[(n-j)\bfa^{n-j-1}\overline{\bfa}^{j} \bfb+(j+1)\beta_{n-j-1,j+1}\cdot \bfa^{n-j-1}\overline{\bfa}^{j}\overline{\bfb}+\beta_{n-j-1,j}\cdot \bfa^{n-j-1}\overline{\bfa}^{j}.\]
Notice that this expression does not make sense when $j=n$, which is the reason why we needed $j\in \{0,\ldots,n-1\}$. The above coefficient has to be equal to the coefficient of $z^{n-j-1}\bar{z}^{j}$ in $\lambda\cdot F(z,\bar{z})$, namely $\lambda\cdot \alpha_{n-j-1,j}$. Therefore, after taking \eqref{eq:lamda} into account and dividing by $\bfa^{n-j-1}\overline{\bfa}^{j}$, we deduce that
\begin{equation}\label{eq:first}
(n-j)\beta_{n-j,j}\cdot \bfb+(j+1)\beta_{n-j-1,j+1}\cdot \overline{\bfb}+\beta_{n-j-1,j}= \frac{\beta_{n-j,j}\cdot\alpha_{n-j-1,j}}{\alpha_{n-j,j}}\cdot \bfa,
\end{equation}
which is linear in $\bfa,\bfb,\overline{\bfb}$. By conjugating \eqref{eq:first}, we also deduce the following relationship:
\begin{equation}\label{eq:second}
(j+1)\overline{\beta}_{n-j-1,j+1}\cdot \bfb+(n-j)\overline{\beta}_{n-j,j}\cdot\overline{\bfb}+\overline{\beta}_{n-j-1,j}= \frac{\overline{\beta}_{n-j,j}\cdot\overline{\alpha}_{n-j-1,j}}{\overline {\alpha}_{n-j,j}}\cdot \overline{\bfa}.
\end{equation}

Let $\Delta_j=(n-j)^2\cdot |\beta_{n-j,j}|^2-(j+1)^2\cdot |\beta_{n-j-1,j+1}|^2 $. The following observation is crucial for us: if $\Delta_j\neq 0$, then we can write $\bfb$ as a linear expression $\bfb=\eta(\bfa,\overline{\bfa})$ from the linear system consisting of \eqref{eq:first} and \eqref{eq:second}. Hence, we will distinguish the following cases. Here we assume that $\alpha_{n-j,j}\neq 0$ iff $\beta_{n-j,j}\neq 0$, since otherwise we know that ${\mathcal C}_1$ and ${\mathcal C}_2$ cannot be similar. 
\begin{itemize}
\item We will say that we are in the \emph{general case for ${\mathcal C}_2$}, if there exists some $j\in \{0,\ldots,n-1\}$ with $\alpha_{n-j,j}\neq0$ such that $\Delta_j\neq 0$.
\item We will say that we are in the \emph{special case for ${\mathcal C}_2$} if for all $j\in \{0,\ldots,n-1\}$ with $\alpha_{n-j,j}\neq0$, we have $\Delta_j=0$. 
\end{itemize}
The general or special cases for ${\mathcal C}_1$ can be introduced similarly. 

In order to quickly detect whether or not we are in the general or the special case, we can proceed as follows. Let $\bfj$ be the minimum of the $j\in \{0,\ldots,n-1\}$ such that $\alpha_{n-j,j},\beta_{n-j,j}\neq 0$  (observe that $\bfj \leq \lceil{\frac{n}{2}\rceil}$). Then the following result holds.

\begin{lemma} \label{lem-no-gen-case}
Assume that $\alpha_{n-j,j}\neq 0$ iff $\beta_{n-j,j}\neq 0$. The special case for ${\mathcal C}_2$ occurs if and only if $|\beta_{n-j,j}|={n \choose j}\cdot |\beta_{n-\bfj,\bfj}|$ for $j=0,\ldots,n-1$. In particular, if $\bfj\neq 0$ then the special case for ${\mathcal C}_2$ does not occur.
\end{lemma}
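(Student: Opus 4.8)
The plan is to reduce the vanishing condition $\Delta_j=0$ to a multiplicative recurrence on the moduli $|\beta_{n-j,j}|$ and to play it against the conjugation symmetry supplied by Lemma \ref{lem-alphas}. Writing $c_j:=|\beta_{n-j,j}|\geq 0$ for brevity and taking nonnegative square roots, the equation $\Delta_j=0$ is, since $n-j>0$ and $j+1>0$ for $j\in\{0,\ldots,n-1\}$, equivalent to the single relation
\[
(n-j)\,c_j=(j+1)\,c_{j+1}.
\]
Two elementary facts will be used throughout. First, Lemma \ref{lem-alphas} gives $|\beta_{n-j,j}|=|\overline{\beta}_{j,n-j}|=|\beta_{j,n-j}|$, that is $c_j=c_{n-j}$; in particular $c_0=c_n$. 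Second, the binomial identity $(n-j)\binom{n}{j}=(j+1)\binom{n}{j+1}$ shows that $c_j=\binom{n}{j}\,c_0$ solves the recurrence above for every $j$.

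For the forward implication I would assume the special case, so that $\Delta_j=0$ holds whenever $c_j\neq 0$ and $j\leq n-1$. The decisive observation is a \emph{nonvanishing-propagation} phenomenon: if $c_j\neq 0$ and $\Delta_j=0$, then the recurrence forces $c_{j+1}=\frac{n-j}{j+1}\,c_j\neq 0$. Starting from the minimal index $\bfj$ with $c_{\bfj}\neq 0$ and iterating this for $j=\bfj,\bfj+1,\ldots,n-1$, I obtain $c_j\neq 0$ for every $\bfj\leq j\leq n$, and solving the recurrence yields $c_j=\frac{(n-\bfj)!\,\bfj!}{(n-j)!\,j!}\,c_{\bfj}$ on that range. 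In particular $c_n\neq 0$; but $c_n=c_0$ by the symmetry, so $c_0\neq 0$, which by minimality of $\bfj$ forces $\bfj=0$. Substituting $\bfj=0$ into the solved recurrence gives exactly $c_j=\binom{n}{j}\,c_0=\binom{n}{j}\,c_{\bfj}$ for all $j$. This simultaneously proves the final ``in particular'' assertion by contraposition: if $\bfj\neq 0$, the special case cannot occur.

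For the converse I would assume $c_j=\binom{n}{j}\,c_{\bfj}$ for $j=0,\ldots,n-1$. Evaluating at $j=0$ gives $c_0=c_{\bfj}\neq 0$, hence $\bfj=0$ and $c_j=\binom{n}{j}\,c_0$. Invoking the binomial identity recorded above, together with $c_n=c_0=\binom{n}{n}c_0$ (from the symmetry) to cover the top index $j=n-1$, I get $\Delta_j=0$ for every $j\in\{0,\ldots,n-1\}$; since under the standing assumption the relevant indices are precisely those with $c_j\neq 0$, this is exactly the special case.

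The step I expect to be the crux is the nonvanishing propagation and its interaction with the symmetry $c_0=c_n$: this is what rules out a special case with $\bfj>0$, in which the stated formula would otherwise fail at the indices $j<\bfj$ (there the left-hand side vanishes while the right-hand side does not). The remaining pieces — the binomial identity and the solution of the first-order recurrence — are routine.
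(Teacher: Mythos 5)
Your proof is correct and takes essentially the same route as the paper's: the converse via the binomial identity $(n-j)\binom{n}{j}=(j+1)\binom{n}{j+1}$, and the forward direction by propagating nonvanishing through the recurrence $(n-j)c_j=(j+1)c_{j+1}$ from $\bfj$ up to $n$, then invoking the conjugation symmetry $c_0=c_n$ from Lemma \ref{lem-alphas} to force $\bfj=0$. If anything, your write-up is slightly more careful than the paper's: you state the solved recurrence with the correct constant $\binom{n}{j}/\binom{n}{\bfj}$ (the paper's displayed induction formula silently assumes $\bfj=0$), and you explicitly cover the top index $j=n-1$ in the converse via $c_n=c_0$, a point the paper glosses over.
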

 
\begin{proof} $(\Leftarrow)$ Since $|\beta_{n-j,j}|={n \choose j}\cdot |\beta_{n-\bfj,\bfj}|$ for $j=0,\ldots,n-1$, we have
\[\Delta_j=\left[(n-j)^2 {n\choose j}^2-(j+1)^2 {n\choose j+1}^2\right]\cdot  |\beta_{n-\bfj,\bfj}|^2.\]
Since $\frac{{n \choose j+1}}{{n \choose j}}=\frac{n-j}{j+1}$, we can easily see that $\Delta_j=0$. Furthemore, since by definition $\beta_{n-\bfj,\bfj}\neq 0$, we have $\beta_{n-j,j}\neq 0$ for $j=0,\ldots,n-1$. Therefore, $\alpha_{n-j,j}\neq 0$ for $j=0,\ldots,n-1$.

$(\Rightarrow)$ By induction on $k$, one can prove that $|\beta_{n-(\bfj+k),\bfj+k}|={n \choose \bfj+k}\cdot |\beta_{n-\bfj,\bfj}|$ for $0\leq k\leq n-\bfj$. Thus, when $k=n-\bfj$ we deduce that $|\beta_{0,n}|\neq 0$. Therefore, $\beta_{0,n}\neq 0$ and by Lemma \ref{lem-alphas}, $\beta_{n,0}\neq 0$. Hence $\bfj=0$. Furthermore, since by definition $\beta_{n-\bfj,\bfj}\neq 0$, we also deduce that all the $\alpha_{n-j,j}$'s are nonzero.
\end{proof}

\begin{corollary}\label{allspecial}
If $\CCC_1,\CCC_2$ are similar, the special case for $\CCC_2$ occurs iff the special case for $\CCC_1$ occurs too.
\end{corollary}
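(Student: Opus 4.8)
The plan is to show that a similarity forces the moduli of the leading (degree-$n$) coefficients of $F$ and $G$ to be proportional by a single positive factor, and then to observe that the characterization of the special case provided by Lemma \ref{lem-no-gen-case} depends only on these moduli up to such a factor.

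First I would invoke Theorem \ref{relat}. Suppose $\CCC_1,\CCC_2$ are related by an orientation-preserving similarity $h(z)=\bfa z+\bfb$ (the orientation-reversing case is handled identically, see below). Then there is $\lambda\neq 0$ with $G(\bfa z+\bfb,\overline{\bfa}\bar{z}+\overline{\bfb})=\lambda\, F(z,\bar{z})$. The next step is to compare the degree-$n$ homogeneous parts of both sides. Substituting $z\mapsto \bfa z+\bfb$ and $\bar{z}\mapsto \overline{\bfa}\bar{z}+\overline{\bfb}$ can only lower the degree through the constants $\bfb,\overline{\bfb}$, so the degree-$n$ part of the left-hand side is exactly $G_n(\bfa z,\overline{\bfa}\bar{z})$. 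Equating coefficients of $z^{n-j}\bar{z}^j$ for $j=0,\ldots,n$ then yields
\[
\beta_{n-j,j}\,\bfa^{n-j}\overline{\bfa}^{\,j}=\lambda\,\alpha_{n-j,j},\qquad j=0,\ldots,n,
\]
which is the degree-$n$ analogue of \eqref{eq:lamda} extended to every $j$.

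Taking moduli and writing $r=|\bfa|>0$, one has $|\bfa^{n-j}\overline{\bfa}^{\,j}|=r^{n}$ for every $j$, independently of $j$. Hence
\[
|\beta_{n-j,j}|=c\cdot|\alpha_{n-j,j}|,\qquad c:=\frac{|\lambda|}{r^{n}}>0,
\]
for all $j=0,\ldots,n$. This is the crucial relationship. In particular $\alpha_{n-j,j}\neq 0$ iff $\beta_{n-j,j}\neq 0$, so the two sequences share the same zero pattern, the index $\bfj$ is common to both curves, and the standing assumption of Lemma \ref{lem-no-gen-case} is automatically met.

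The corollary then follows at once. By Lemma \ref{lem-no-gen-case} the special case for $\CCC_2$ occurs iff $|\beta_{n-j,j}|={n\choose j}\cdot|\beta_{n-\bfj,\bfj}|$ for $j=0,\ldots,n-1$; substituting $|\beta_{n-j,j}|=c\,|\alpha_{n-j,j}|$ and cancelling the common factor $c>0$ turns this into $|\alpha_{n-j,j}|={n\choose j}\cdot|\alpha_{n-\bfj,\bfj}|$ for $j=0,\ldots,n-1$, which is precisely the condition for the special case for $\CCC_1$. For the orientation-reversing case, Theorem \ref{relat} gives instead $\beta_{j,n-j}\,\bfa^{\,j}\overline{\bfa}^{\,n-j}=\lambda\,\alpha_{n-j,j}$; taking moduli and using $|\beta_{j,n-j}|=|\beta_{n-j,j}|$ from Lemma \ref{lem-alphas} recovers the identical relationship $|\beta_{n-j,j}|=c\,|\alpha_{n-j,j}|$, so the argument is unchanged. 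The only delicate point is the leading-form extraction: one must check that the affine shift contributes nothing in degree $n$ and that the bookkeeping produces the factor $\bfa^{n-j}\overline{\bfa}^{\,j}$, whose modulus $r^{n}$ is free of $j$. Once that is secured the remainder is a pure cancellation, so I anticipate no real obstacle.
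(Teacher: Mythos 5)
Your proposal is correct and follows essentially the same route as the paper: both derive the degree-$n$ coefficient identity $\bfa^{n-j}\overline{\bfa}^{\,j}\beta_{n-j,j}=\lambda\,\alpha_{n-j,j}$ from the similarity equation, take moduli (using that $|\bfa^{n-j}\overline{\bfa}^{\,j}|=|\bfa|^n$ is independent of $j$) to obtain the proportionality $|\beta_{n-j,j}|=c\,|\alpha_{n-j,j}|$, and then conclude by Lemma \ref{lem-no-gen-case}. Your explicit treatment of the orientation-reversing case and of the leading-form extraction only makes explicit what the paper leaves implicit.
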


\begin{proof} If $\CCC_1,\CCC_2$ are similar then from Equation \eqref{eq:eqimplicits-1}, we have that 
\begin{equation}\label{larel}
\bfa^{n-j}\cdot \overline{\bfa}^j\cdot \beta_{n-j,j}=\lambda\cdot \alpha_{n-j,j}
\end{equation}
for $j=0,\ldots,n$. Furthermore, by taking the absolute value in \eqref{eq:lamda} and fixing $j=\bfj$, we get that 
\begin{equation}\label{lam}
|\lambda|=\frac{|\beta_{n-\bfj,\bfj}|\cdot |\bfa|^n}{|\alpha_{n-\bfj,\bfj}|}.
\end{equation}
So from \eqref{larel} and \eqref{lam}, we get 
\[|\alpha_{n-j,j}|=\frac{|\alpha_{n-\bfj,\bfj}|\cdot |\beta_{n-j,j}|}{|\beta_{n-\bfj,\bfj}|}.\]
Then the statement follows from Lemma \ref{lem-no-gen-case}.
\end{proof}

If the special case happens for one of the curves but not the other, from Corollary \ref{allspecial} the curves cannot be similar. So from now on we will assume that if the special case happens for one curve, it also happens for the other curve, and we will refer to this situation as the \emph{special case}. In the next two sections, we provide algorithms to check similarity in each of these cases.

\section{The general case.} \label{sec-general}

By using the expressions \eqref{eq:lamda}, \eqref{eq:first} and \eqref{eq:second} we can write $\lambda, \bfb, \overline{\bfb}$ in terms of $\bfa$, $\overline{\bfa}$. Therefore, the system ${\mathcal S}$ gives rise to a bivariate system in $\bfa,\overline{\bfa}$ with complex coefficients, and $\CCC_1$ and $\CCC_2$ are similar iff this system has some solution with $\bfa \neq 0$. However, we will see that for a generic pair of curves $\CCC_1,\CCC_2$, we can do better. The rough idea is to move from the bivariate system in $\bfa, \overline{\bfa}$ to another bivariate system in two new variables, which contains at least one univariate equation. The fact that the system contains a univariate equation makes the analysis of the system much easier and faster. This idea does not work in certain special, bad situations, described below. 

Let $\bfa=|\bfa|\cdot e^{i\theta}=|\bfa|\cdot (\cos\theta+i\sin\theta)$, $\theta\in (-\pi,\pi]$. We need to consider separately the cases $\cos(\theta)\neq 0$ and $\cos(\theta)=0$. We focus first on the case $\cos\theta\neq 0$; the case $\cos\theta=0$ is easier and will be treated at the end of the section. Under the assumption $\cos\theta\neq 0$, we can write $\bfa=r\cdot (1+i\omega)$, where $r=|\bfa|\cdot \cos\theta$ and $\omega=\mbox{tg}(\theta)$. Now observe that if $\CCC_1$ and $\CCC_2$ are related by a similarity $h(z)=\bfa z+\bfb$, then $h$ maps the curve defined by the form of maximum degree of ${\mathcal C}_1$, $f_n(x,y)$, into the curve defined by the form of maximum degree of ${\mathcal C}_2$, $g_n(x,y)$. The former is mapped onto the latter by the mapping $\tilde{h}(z)=\bfa z$, which is also a similarity. The curves $f_n(x,y)=0$ and $g_n(x,y)=0$ are the union of $n$ (possibly complex) lines, counted with multiplicity. Let us denote the lines corresponding to the curve $f_n(x,y)=0$ by ${\mathcal L}_1,\ldots,{\mathcal L}_n$, and the lines corresponding to the curve $g_n(x,y)=0$ by ${\mathcal M}_1,\ldots,{\mathcal M}_n$. If ${\mathcal C}_1$ and ${\mathcal C}_2$ are similar, the ${\mathcal M}_i$'s are the result of rotating the ${\mathcal L}_i$'s about the origin by an angle equal to $\theta$. Let us see that in most situations we can either directly find $\omega$, or find a polynomial $P(t)$ such that $\omega$ is a real root of $P(t)$. In order to find $P(t)$, we distinguish two cases: 

\begin{itemize}
		
\item {\bf Case (1): $g_n(x,y)=x^k(ax+by)^{n-k}$}, $k\neq 0$, $k\neq n$, $b\neq 0$. If the curves ${\mathcal C}_1$ and ${\mathcal C}_2$ are similar then $f_n(x,y)$ must have two irreducible factors as well, with multiplicities $k,n-k$. If $x$ does not divide $f_n(x,y)$ we can move to the second case. Otherwise $f_n(x,y)=x^\ell(cx+dy)^{n-\ell}$, where $k=\ell$ or $k=n-\ell$, $\ell\neq 0$. Now we have several subcases:  
    \begin{itemize}
    \item If $a=c=0$ then $f_n(x,y)=x^\ell y^{n-\ell}$ and $g_n(x,y)=x^k y^{n-k}$. In this case, we proceed as follows: (1) if $k\neq n-k$, then if $k=\ell$ then $\omega=0$, i.e. it is a root of $P(t)=t$, and if $k\neq \ell$ then $\cos\theta=0$, which has been postponed; (2) if $k= n-k$, then either $\omega=0$ or $\cos\theta=0$, and both possibilities must be explored.
    \item If $a\neq 0$ or $c\neq 0$, then:
    \begin{itemize}
    \item If $k=\ell$ and $n-k\neq k$, then $\omega=0$, i.e. $\omega$ is a root of $P(t)=t$.
    \item If $k\neq \ell$ then $\omega=-d/c$, i.e. $\omega$ is a root of $P(t)=t+d/c$.
    \item If $k=n-k=\ell$ then $\omega$ is a root of $P(t)=t(t+d/c)$.
    \end{itemize}
    \end{itemize}
\item {\bf Case (2): case (1) does not occur}. Here, both $f_n$ and $g_n$ have factors corresponding to lines $y=mx$. In the case of $f_n(x,y)$, the $m$'s are the (possibly complex) roots of $p(y)=f_n(1,y)$; in the case of $g_n(x,y)$, the $m$'s are the (possibly complex) roots of $q(y)=g_n(1,y)$. Notice that $\theta=\gamma_2-\gamma_1$, where $\gamma_1$ is the angle formed by one of the ${\mathcal L}_i$'s and the positive $x$-axis, and $\gamma_2$ is the angle for the corresponding ${\mathcal M}_i$. Since $\gamma_2=\gamma_1+\theta$, denoting $m=\mbox{tg}(\gamma_1)$, $\tilde{m}=\mbox{tg}(\gamma_2)$, from the tangent angle addition formula we get that:
\[\tilde{m}=\frac{m+\omega}{1-m\omega}.\]Notice that $m\omega\neq 1$, because we can assume that we are relating two linear factors of $f_n(x,y)$ and $g_n(x,y)$ different from $x$. Now $\tilde{m}$ is a root of $q(y)$ and $m$ is a root of $p(y)$. Therefore, writing
\begin{equation}\label{elQ}
Q(t,y):=q\left(\frac{y+t}{1-yt}\right),
\end{equation}
$\omega$ is a root of
\begin{equation}\label{Pres}
P(t)=\mbox{Res}_y\left(p(y),\mbox{num}(Q(t,y))\right)=0,
\end{equation}
where ``num" denotes the numerator of the expression in brackets.
\end{itemize}

Continuing with the description of the algorithm, since we have $\bfb=\eta(\bfa,\overline{\bfa})$ with $\bfa=r\cdot (1+i\omega)$ and $\eta$ linear, we get $\bfb=\eta(r,\omega)$, $\overline{\bfb}=\overline{\eta}(r,\omega)$. By substituting these expressions into the system ${\mathcal S}$, we reach a bivariate system in the variables $r,\omega$. By adding the equation $P(\omega)=0$ to this system, the curves ${\mathcal C}_1, {\mathcal C}_2$ are similar if and only if this new system has a solution with $r\in {\Bbb R}$, $r\neq 0$ and $\omega\in \mathbb{R}$. 

One can wonder if the polynomial $P(t)$ in \eqref{Pres} can be identically zero, in which case the advantage of having a univariate equation is lost. The answer is affirmative, as shown by the following proposition. 

 \begin{proposition} \label{escero}
The polynomial $P(t)$ in \eqref{Pres} is identically zero iff $p(y)$ and $q(y)$ are both divisible by $y^2+1$.
\end{proposition}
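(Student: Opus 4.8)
The plan is to analyze $P(t)$ from \eqref{Pres} as a resultant over the field $\mathbb{Q}(t)$ and to translate its identical vanishing into a statement about common roots of $p(y)$ and the numerator of $Q(t,y)$. Writing $q(y)=\sum_{i=0}^{d}c_i y^i$ with $c_d\neq 0$, I would first record the explicit numerator
\[
R(t,y):=\mathrm{num}(Q(t,y))=\sum_{i=0}^{d}c_i\,(y+t)^i(1-yt)^{d-i},
\]
and observe two facts. Its degree in $y$ equals $d$ for all but finitely many $t$, since its leading coefficient in $y$ is, up to sign, $t^d q(-1/t)$; and the map $y\mapsto \frac{y+t}{1-yt}$ is, for fixed $t$, a M\"obius transformation whose inverse sends a root $\mu$ of $q$ to the root $y=\frac{\mu-t}{1+\mu t}$ of $R(t,\cdot)$. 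The single observation driving the whole argument is that $\pm i$ are common fixed points of all these maps: since $1-it=-i(i+t)$, one gets $\frac{i+t}{1-it}=i$ (and likewise $\frac{-i+t}{1+it}=-i$) for every $t$.

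For the implication $(\Leftarrow)$ I would argue as follows. If $y^2+1$ divides both $p$ and $q$, then $p(i)=q(i)=0$; since $\frac{i+t}{1-it}=i$ identically in $t$, the value $y=i$ is simultaneously a root of $p(y)$ and of $R(t,y)$ for every generic $t$. Hence $\mathrm{Res}_y(p,R(t,\cdot))$ vanishes for infinitely many $t$, and $P(t)$, being a polynomial in $t$, is identically zero.

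For $(\Rightarrow)$, suppose $P(t)\equiv 0$. Then for every $t$ outside the finite set where $\deg_y R(t,\cdot)<d$ the resultant $\mathrm{Res}_y(p,R(t,\cdot))$ vanishes, so $p$ and $R(t,\cdot)$ share a root; equivalently, some root $\nu$ of $p$ equals $\frac{\mu-t}{1+\mu t}$ for some root $\mu$ of $q$. As $p$ and $q$ have only finitely many roots while infinitely many $t$ are admissible, by pigeonhole a single pair $(\nu,\mu)$ satisfies $\nu(1+\mu t)=\mu-t$ for infinitely many $t$, hence identically. Comparing constant and linear coefficients in $t$ gives $\nu=\mu$ and $\nu\mu=-1$, so $\nu=\mu=\pm i$. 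Thus $p$ and $q$ share the root $i$ (or $-i$); because $p$ and $q$ have real coefficients, $p(i)=0\Leftrightarrow(y^2+1)\mid p$ and similarly for $q$, yielding $(y^2+1)\mid p$ and $(y^2+1)\mid q$.

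The main obstacle is the bookkeeping that turns ``$P(t)$ is the zero polynomial'' into ``$p$ and $R(t,\cdot)$ share a root for generic $t$'': I must control the finitely many $t$ for which the $y$-degree of $R(t,\cdot)$ drops, so that the resultant genuinely detects a common root, and check that forming the numerator introduces no spurious factor $1-yt$ — which it does not, since $R(t,1/t)=c_d\bigl(\frac{1+t^2}{t}\bigr)^{d}\neq 0$. Once these routine degree checks are in place, the finiteness step and the reality of $p,q$ complete the argument.
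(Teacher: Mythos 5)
Your proof is correct, and its algebraic core coincides with the paper's: both arguments reduce to showing that $P(t)\equiv 0$ forces a root $\nu$ of $p$ and a root $\mu$ of $q$ to satisfy $\nu(1+\mu t)=\mu-t$ identically in $t$, whence $\nu=\mu$ and $\nu\mu=-1$, so the shared root is $\pm i$, and the reality of $p,q$ upgrades this to divisibility by $y^2+1$. Where you genuinely differ is in how ``$P\equiv 0$'' is converted into that identity. The paper stays over the function field $\CC(t)$: it factors $\mathrm{num}(Q(t,y))=\prod_i\bigl(y(1+t\,\tilde m_i)+t-\tilde m_i\bigr)$ and applies, once, the criterion that the resultant of two polynomials of positive degree in $y$ vanishes identically iff they share a factor of positive degree in $y$ (Chapter 3 of \cite{Cox}); since $p$ splits into linear factors with coefficients free of $t$, some $y-m_r$ must divide $\mathrm{num}(Q(t,y))$, and the identity drops out with no genericity analysis at all. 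You instead specialize $t$ at infinitely many values, use ``resultant $=0$ iff common root'' pointwise, and recover the identity by pigeonhole over the finitely many root pairs $(\nu,\mu)$. What your route buys is that it is elementary and self-contained: you never invoke the common-factor criterion over $\CC(t)$ or the explicit factorization of $\mathrm{num}(Q)$, and your fixed-point observation $\frac{i+t}{1-it}=i$ makes the $(\Leftarrow)$ direction a one-line verification. What it costs is exactly the bookkeeping you flag: the specialization identity $P(t_0)=\mathrm{Res}_y\bigl(p(y),R(t_0,y)\bigr)$ is legitimate only when $\deg_y R(t_0,\cdot)=d$ (and holds then because the leading coefficient of $p$ does not involve $t$), so you must control the leading coefficient $\pm t^d q(-1/t)$, verify $R(t,1/t)\neq 0$, and also discard the finitely many $t_0$ with $1+\mu t_0=0$, where the inverse M\"obius image of a root $\mu$ escapes to infinity --- a minor point your write-up glosses over but which is handled by the same genericity argument. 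Both proofs are valid; the paper's is shorter, yours trades one function-field lemma for routine degree and genericity checks.
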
 

\begin{proof} 
Let
$$
p(y)=\prod\limits_{i=1}^n (y-m_i),\quad q(y)=\prod\limits_{i=1}^n (y-\tilde{m}_i).
$$
Hence
$$
Q(t,y)=\frac{1}{(1-yt)^n}\prod\limits_{i=1}^n (y + t -\tilde{m}_i +y\, t\, \tilde{m}_i ) ,
$$
which implies
$$
\mathrm{num}(Q(t,y)) = \prod\limits_{i=1}^n (y(1+  t\, \tilde{m}_i) +t-\tilde{m}_i  ) .
$$
Since $p(y)$ and $\mathrm{num}(Q(t,y))$ have both positive degree in $y$, $P(t)=0 $ if and only if they have a common factor with positive degree in $y$ (see Chapter 3 in \cite{Cox}). In turn, this happens if and only if  there exists $r\in\{1,\ldots,n\}$ such that $(y-m_r)$ divides $\mathrm{num}(Q(t,y))$. As a consequence $\mathrm{num}(Q(t,m_r ))=0$. Then there exists  $j\in\{1,\ldots,n\}$ with 
$(m_r + t -\tilde{m}_j +t \, m_r \, \tilde{m}_j )=0$, and so $m_r = \tilde{m}_j$, $1+\tilde{m}_j\,m_r=0$ hold. Since $p(y)$ and $q(y)$ are real polynomials, this implies that $y^2+1$ divides both polynomials. 
\end{proof}
%

Finally let us consider the case $\cos\theta=0$. Here we have $\bfa=i\mu$, where $\mu=|\bfa|\cdot \sin\theta$, and therefore $\overline{\bfa}=-i\mu$. By using $\bfb=\eta(\bfa,\overline{\bfa})$ and plugging these relationships into the system ${\mathcal S}$, we reach a polynomial system with just one variable, $\mu$. Hence, here we just need to check whether or not the $\gcd$ of the (univariate, and with complex coefficients) polynomials in the system has some nonzero root. 

Hence, we have the following algorithm {\tt Impl-Sim-General} to check similarity in the general case.

\begin{algorithm}
\begin{algorithmic}[1]
\REQUIRE Two irreducible, implicit algebraic curves $f(x,y)$, $g(x,y)$, neither lines nor circles, satisfying the hypotheses of the general case.
\ENSURE Whether or not they are related by an orientation-preserving similarity.
\STATE Find the system ${\mathcal S}$.
\STATE Find $j$ such that $\alpha_{n-j,j}\neq 0$ and $\Delta_j\neq 0$. 
\STATE Find $\lambda(\bfa,\overline{\bfa})$ from \eqref{eq:lamda}. 
\STATE Find $\bfb=\eta(\bfa,\overline{\bfa})$ $\bar{\bfb}=\bar{\eta}(\bfa,\overline{\bfa})$ and from \eqref{eq:first} and \eqref{eq:second}.
\STATE  Substitute $\lambda:=\lambda(\bfa,\overline{\bfa})$, $\bfb:=\eta(\bfa,\overline{\bfa})$, $\overline{\bfb}=\overline{\eta}(\bfa,\overline{\bfa})$ in the system ${\mathcal S}$. 

[Similarities with $\cos\theta\neq 0$:] .
\STATE Compute the polynomial $P(t)$.
\STATE  Substitute $\bfa=r\cdot (1+i\omega)$ , $\overline{\bfa}=r\cdot (1-i\omega)$ , $\bfb=\eta(\bfa,\overline{\bfa})$, $\overline{\bfb}=\overline{\eta}(\bfa,\overline{\bfa})$ in ${\mathcal S}$. If $P(t)$ is not identically zero, add the equation $P(\omega)=0$ to the system.
\STATE Check if the system has any solution with $r\in {\Bbb R}$, $r\neq 0$ and $\omega \in {\Bbb R}$.
\STATE In the affirmative case, return ``{\tt The curves are related by an orientation-preserving similarity}".

 [Similarities with $\cos\theta=0$.]
\STATE Substitute $\bfa=i\mu$, $\overline{\bfa}=-i\mu$, $\bfb=\eta(\bfa,\overline{\bfa})$, $\overline{\bfb}=\overline{\eta}(\bfa,\overline{\bfa})$ in ${\mathcal S}$.
\STATE Check if the univariate polynomials in $\mu$ obtained this way have a non-trivial $\gcd$ with some nonzero real root.
\STATE In the affirmative case, return ``{\tt The curves are related by an orientation-preserving similarity}".
\STATE If no similarities have been found, return ``{\tt The curves are not related by an orientation-preserving similarity}".
\end{algorithmic}
\caption*{{\bf Algorithm} {\tt Impl-Sim-General}}
\end{algorithm}

\section{The special case.} \label{sec-special}

From Lemma \ref{lem-no-gen-case} in Section \ref{sec-main}, here we have $\bfj=0$. Furthermore, $\alpha_{n,0}\neq 0$ and $\beta_{n,0}\neq 0$. Now if $\alpha_{n-1,0}\neq 0$, then from \eqref{eq:first} with $j=0$ we can write $\bfa$ as a linear function of $\bfb,\overline{\bfb}$, i.e. $\bfa=\rho(\bfb,\overline{\bfb})$. Let us see that this requirement can always be fulfilled by means of an appropriate translation.  

\begin{lemma} \label{lem-trans}
Let $T(z)=z+\kappa$ represent a translation by $\kappa\in {\Bbb C}$, and let $\tilde{\mathcal C}_1=T({\mathcal C}_1)$. Also, let $j\in \{0,\ldots,n-1\}$. The coefficient of $\tilde{\mathcal C}_1$ of bidegree $(n-j-1,j)$ is
\begin{equation}\label{beta}
\tilde{\alpha}_{n-j-1,j}=\alpha_{n-j-1,j}+\kappa\cdot (n-j)\alpha_{n-j,j}+\overline{\kappa}\cdot (j+1)\alpha_{n-j-1,j+1}.\end{equation}As a consequence, if $\alpha_{n-j,j}\neq 0$ and $\alpha_{n-j-1,j}=0$ then for almost all $\kappa\in {\Bbb C}$, we have $\tilde{\alpha}_{n-j-1,j}\neq 0$.
\end{lemma}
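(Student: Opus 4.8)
The plan is to obtain the complex implicit equation of the translated curve $\tilde{\CCC}_1$ and then extract the coefficient of bidegree $(n-j-1,j)$ by a direct binomial expansion. Since $w\in\tilde{\CCC}_1=T(\CCC_1)$ exactly when $T^{-1}(w)\in\CCC_1$, the complex implicit equation of $\tilde{\CCC}_1$ is, up to a nonzero constant, $F$ with $z,\bar z$ replaced according to the translation; with the sign convention of \eqref{beta} this amounts to the substitution $z\mapsto z+\kappa$, $\bar z\mapsto\bar z+\bar\kappa$, and since the conclusion only concerns the dependence on $\kappa$ the sign is in any case immaterial. I would therefore write
\[
\tilde F(z,\bar z)=F(z+\kappa,\bar z+\bar\kappa)=\sum_{s,t}\alpha_{s,t}(z+\kappa)^s(\bar z+\bar\kappa)^t
\]
and expand each factor with the binomial theorem.

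Next I would pin down exactly which monomials $\alpha_{s,t}z^s\bar z^t$ of $F$ can feed into $z^{n-j-1}\bar z^j$. Because $(z+\kappa)^s(\bar z+\bar\kappa)^t$ only produces monomials $z^{s'}\bar z^{t'}$ with $s'\le s$ and $t'\le t$, a contribution requires $s\ge n-j-1$ and $t\ge j$, hence $s+t\ge n-1$; as $\deg F=n$ forces $s+t\le n$, only the total degrees $s+t=n-1$ and $s+t=n$ survive. A one-line case check then leaves precisely the three bidegrees $(n-j-1,j)$, $(n-j,j)$ and $(n-j-1,j+1)$. Reading off the relevant binomial coefficients — the coefficient of $z^{n-j-1}$ in $(z+\kappa)^{n-j}$ is $\binom{n-j}{n-j-1}\kappa=(n-j)\kappa$, the coefficient of $\bar z^{j}$ in $(\bar z+\bar\kappa)^{j+1}$ is $\binom{j+1}{j}\bar\kappa=(j+1)\bar\kappa$, while the bidegree $(n-j-1,j)$ term contributes $\alpha_{n-j-1,j}$ with no power of $\kappa$ — and collecting them yields formula \eqref{beta}.

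For the consequence, I would substitute $\alpha_{n-j-1,j}=0$ and $\alpha_{n-j,j}\neq 0$ into \eqref{beta}, obtaining
\[
\tilde\alpha_{n-j-1,j}=(n-j)\,\alpha_{n-j,j}\,\kappa+(j+1)\,\alpha_{n-j-1,j+1}\,\bar\kappa.
\]
Writing $A=(n-j)\alpha_{n-j,j}$ and $B=(j+1)\alpha_{n-j-1,j+1}$, the map $\kappa\mapsto A\kappa+B\bar\kappa$ is $\RR$-linear on $\CC\cong\RR^2$, and it is not identically zero: since $j\le n-1$ we have $n-j\ge 1$, so $A\neq 0$, and evaluating $A\kappa+B\bar\kappa$ at $\kappa=1$ and $\kappa=i$ shows that its vanishing for all $\kappa$ would force $A=B=0$. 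A nonzero $\RR$-linear functional vanishes only on a proper subspace of $\RR^2$, which has measure zero; hence $\tilde\alpha_{n-j-1,j}\neq 0$ for almost all $\kappa\in\CC$, as claimed.

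The computation is routine; the only points needing care are the combinatorial bookkeeping that isolates the three contributing bidegrees (together with the matching binomial coefficients) and, in the last step, the remark that the exceptional set of $\kappa$ is genuinely of measure zero even in the degenerate subcase $|A|=|B|$, where $A\kappa+B\bar\kappa=0$ is solved along an entire line through the origin rather than only at $\kappa=0$.
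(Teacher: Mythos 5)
Your proof is correct and takes essentially the same approach as the paper's: expand $F(z+\kappa,\bar z+\bar\kappa)$, isolate the three contributing bidegrees $(n-j-1,j)$, $(n-j,j)$, $(n-j-1,j+1)$ with their binomial coefficients to get \eqref{beta}, and use the evaluations $\kappa=1$ and $\kappa=i$ to rule out identical vanishing. If anything, your final step is slightly more complete than the paper's, which only shows $\tilde{\alpha}_{n-j-1,j}$ cannot vanish for \emph{all} $\kappa$, whereas you make explicit that the zero set of the nonzero $\RR$-linear map $\kappa\mapsto A\kappa+B\bar{\kappa}$ is a proper subspace of $\CC\cong\RR^2$ and hence of measure zero, which is what ``for almost all $\kappa$'' requires.
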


\begin{proof} The coefficient of $\tilde{\mathcal C}_1=T({\mathcal C}_1)$ of bidegree $(n-j-1,j)$ comes from the terms of $F$ with bidegrees $(n-j,j)$, $(n-j-1,j)$ and $(n-j-1,j+1)$. Each of these terms provides, in turn, the terms of \eqref{beta}: the term of $F$ of bidegree $(n-j,j)$ provides the first term of \eqref{beta}, the term of bidegree $(n-j-1,j)$ provides the term in $\kappa$, and the term of $(n-j-1,j+1)$ provides the term in $\overline{\kappa}$. As for the second part of the statement, assume that $\alpha_{n-j,j}\neq 0$, $\alpha_{n-j-1,j}=0$, and suppose by contradiction that $\tilde{\alpha}_{n-j-1,j}$ vanishes for all $\kappa$. By substituting $\kappa=1$ in \eqref{beta}, we have $(n-j)\alpha_{n-j,j}+(j+1)\alpha_{n-j-1,j+1}=0$; by substituting $\kappa=i$ in \eqref{beta}, we get that $(n-j)\alpha_{n-j,j}-(j+1)\alpha_{n-j-1,j+1}=0$. Putting these two equations together, we deduce that $\alpha_{n-j-1,j+1}=\alpha_{n-j,j}=0$, contradicting that $\alpha_{n-j,j}\neq 0$.
\end{proof}

Since similarities form a group, ${\mathcal C}_1$ and ${\mathcal C}_2$ are similar if and only if $\tilde{\mathcal C}_1=T({\mathcal C}_1)$ and ${\mathcal C}_2$ are similar. Therefore, there is no problem in applying a translation $T$ on ${\mathcal C}_1$. Furthermore, by Lemma \ref{lem-trans} a random translation will suffice. Now after replacing ${\mathcal C}_1$ by $T({\mathcal C}_1)$, we can write $\bfa=\xi(\bfb,\overline{\bfb})$, where $\xi$ is linear. By substituting $\bfa=\xi(\bfb,\overline{\bfb})$ into the system ${\mathcal S}$, we are led to a bivariate polynomial system ${\mathcal S}^{\star}$ with complex coefficients, where the unknowns are $\bfb$, $\overline{\bfb}$. In turn, this gives rise to a real bivariate polynomial system where the unknowns are the real and imaginary parts $b_1,b_2$ of $\bfb$. Then the curves are similar if and only if this system has a real solution where $\bfa\neq 0$.

Hence, we have the following algorithm {\tt Impl-Sim-Spec} to check similarity in this case.

\begin{algorithm}
\begin{algorithmic}[1]
\REQUIRE Two irreducible, implicit algebraic curves $f(x,y)$, $g(x,y)$, neither lines nor circles, satisfying the hypotheses of the special case.
\ENSURE Whether or not they are related by an orientation-preserving similarity.
\STATE Check whether or not $\alpha_{n-1,0}\neq 0$.
\STATE If $\alpha_{n-1,0}=0$ then find a translation $T(z)=z+k$ such that $\tilde{\alpha}_{n-1,1}\neq 0$, and replace ${\mathcal C}_1\to T({\mathcal C}_1)$.
\STATE Write $\bfa=\xi(\bfb,\overline{\bfb})$, $\lambda(\bfa,\overline{\bfa})$.
\STATE Substitute the above expressions into ${\mathcal S}$.
\STATE Write $\bfb=b_1+ib_2$ to get the real bivariate polynomial system ${\mathcal S}^{\star}$.
\STATE Check if ${\mathcal S}^{\star}$ has a real solution with $\bfa\neq 0$.
\STATE In the affirmative case, return ``{\tt The curves are related by an orientation-preserving similarity}". Otherwise, return ``{\tt The curves are not related by an orientation-preserving similarity}".
\end{algorithmic}
\caption*{{\bf Algorithm} {\tt Impl-Sim-Spec}}
\end{algorithm}

\section{Observations on the ``orientation-reversing" case.}\label{sec-or-reverse}

In order to look for orientation-reversing similarities transforming ${\mathcal C}_1$ into ${\mathcal C}_2$, we must check the consistency of the system ${\mathcal S}'$ (see the beginning of Section \ref{sec-main}), which stems from Equation \eqref{eq:eqimplicits-2}. The analysis is very similar to orientation-preserving similarities. However, for completeness we summarize here some observations that must be taken into account. 

First, Equation \eqref{eq:lamda} must be replaced by 
\begin{equation}\label{eq:lamda2}
\lambda=\frac{\beta_{n-j,j}\bfa^{n-j}\overline{\bfa}^{j}}{ {\alpha}_{ j,n-j}},\end{equation}
which is deduced by comparing the terms of $z^{j}\bar{z}^{n-j}$ at both sides of \eqref{eq:eqimplicits-2}. Similarly, Equation \eqref{eq:first} must be replaced by 

\begin{equation}\label{eq:first2}
(n-j)\beta_{n-j,j}\cdot \bfb+(j+1)\beta_{n-j-1,j+1}\cdot \overline{\bfb}+\beta_{n-j-1,j}= \frac{\beta_{n-j,j}\cdot \alpha_{j,n-j-1}}{ {\alpha}_{j,n-j}}\cdot \bfa, 
\end{equation}
and \eqref{eq:second} must be replaced by 

\begin{equation}\label{eq:second2}
(j+1)\overline{\beta}_{n-j-1,j+1}\cdot \bfb+(n-j)\overline{\beta}_{n-j,j}\cdot\overline{\bfb}+\overline{\beta}_{n-j-1,j}= \frac{\overline{\beta}_{n-j,j}\cdot\overline{\alpha}_{j,n-j-1}}{\overline{\alpha}_{j,n-j}}\cdot \overline{\bfa}.
\end{equation}

Lemma \ref{lem-no-gen-case} also holds here, and again we must distinguish between the general case and the special case, which are formulated exactly in the same way. There are some slight changes in the discussion at the beginning of Section \ref{sec-general}, though. In this case, the curve defined by $f_n(x,y)=0$ is transformed into the curve defined by $g_n(x,y)=0$ under the transformation $h(z)=\bfa \bar{z}$, which geometrically is equivalent to a symmetry with respect to the $x$-axis, followed by a rotation about the origin by an angle $\theta$. Again, the case $\cos(\theta)=0$ must be considered separately, and leads to a simpler case, which presents no new difficulties. The case (1) in Section \ref{sec-general} leads to  either $P(t)=t$, or $P(t)=t(t-d/c)$ or $P(t)=t-d/c$. In case (2) , we have $\theta=\gamma_1+\gamma_2$. So denoting $m=\mbox{tg}(\gamma_1)$, $\tilde{m}=\mbox{tg}(\gamma_2)$, and since $\gamma_2=\theta-\gamma_1$, we get that:
\[\tilde{m}=\frac{\omega-m}{1+m\omega}.\]Hence, $\omega$ is a root of
\[
P(t)=\mbox{Res}_y\left(p(y),\mbox{numer}\left(q\left(\frac{t-y}{1+yt}\right)\right)\right)=0.
\]
Furthermore, Proposition \ref{escero} also holds here. 
 Additionally, in the special case, Lemma \ref{lem-trans} is applied to $\alpha_{j,n-j-1}$.

\section{Implementation and timings }\label{sec-impl}

Algorithms \texttt{Impl-Sim-General} and \texttt{Impl-Sim-Spec} have been implemented in Maple 2015 (\cite{maple}). The code is available in \cite{gmdt}. Since we used Maple's generic solver \texttt{SolveTools:-PolynomialSystem}, the algorithm is deterministic.

\subsection{Example I}
Let $\CCC_1,\CCC_2$ be defined by $f(x,y)$ and $g(x,y)$, where
$$
f(x,y)=15\,{x}^{2}y-40\,x{y}^{2}-15\,{y}^{3}+5\,{x}^{2}+5\,xy-35\,{y}^{2}+5\,
x-5\,y+2
$$
and
$$
g(x,y)={y}^{3}+2\,x{y}^{2}-{x}^{2}y-xy-2\,{x}^{3}+1.
$$
Both curves are shown in Figure \ref{fig:curvas_al}.
\begin{figure}
\centering
\begin{subfigure}[b]{0.40\textwidth}
  \centering
  \includegraphics[scale=0.33]{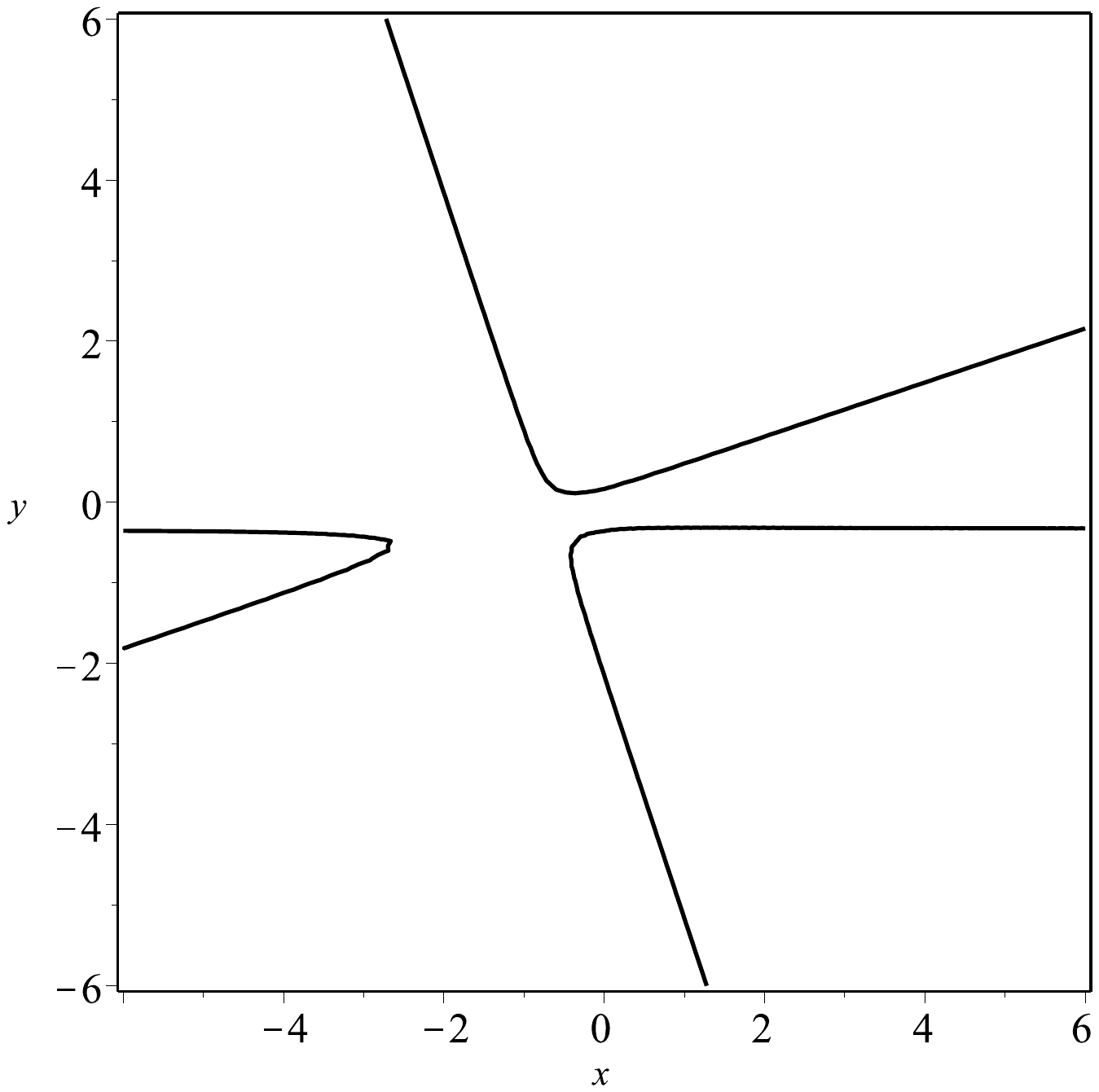}
\end{subfigure}%
\quad
\begin{subfigure}[b]{0.40\textwidth}
  \centering
  \includegraphics[scale=0.33]{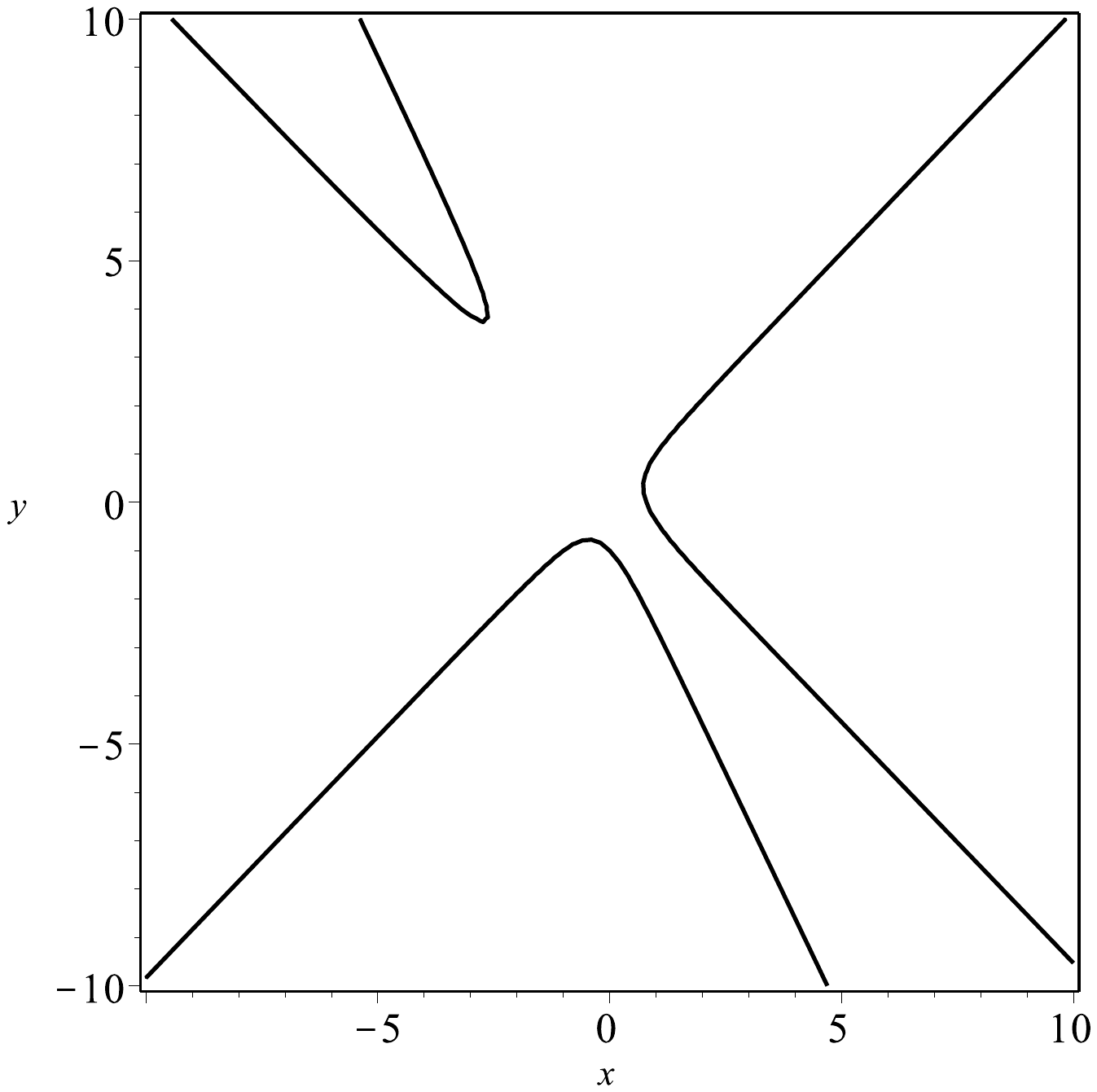}
\end{subfigure}%
\vspace{-3cm}
\caption{The curve $\CCC_1$ (left) and a related curve $\CCC_2$ obtained by similarity (right).}\label{fig:curvas_al} 
\end{figure}
Using Lemma \ref{lem-no-gen-case}, one can check that we fall into the general case. By applying the algorithm {\tt Impl-Sim-General}, we get an orientation-preserving similarity $h(z)=\bfa z +\bfb$ with 
$$
P(t)=-125\,(t^3+2\,t^2-t-2)\,(448\,t^6+4416\,t^5+8880\,t^4-1920\,t^3-8880\,t^2+4416\,t-448).
$$
Adding the equation $P(\omega)=0$ to the system in $r,\omega$ obtained from ${\mathcal S}$, we get
$$
\omega=-2,r=1, \lambda= \left( -{\frac {11}{125}}-{\frac {2\,i}{125}} \right) {r}^{3} \left( 1+i\omega \right) ^{3}=1,
$$
$$
\bfa=r(1+ i \, \omega )= 1-2i,\,\,\bfb= {\frac {17\,r}{50}}-{\frac {19\,\omega\,
r}{50}}-\frac{1}{10}+({\frac {71\,\omega\,r}{100}}+{\frac {11\,r}{50}}+\frac{1}{5} )i= 1-i.
$$
The ratio of this similarity is equal to $\sqrt{5}.$

\subsection{Example II}
Let $\CCC_1,\CCC_2$ be defined by $f(x,y)$ and $g(x,y)$, where
$$
f(x,y)={x}^{4}+2\,{x}^{2}{y}^{2}+{y}^{4}-8\,{x}^{2}y-8\,{y}^{3}+12\,{x}^{2}-6
\,xy+20\,{y}^{2}+12\,x-16\,y;
$$
and
$$
g(x,y)=2\,{x}^{4}+4\,{x}^{2}{y}^{2}+2\,{y}^{4}-{x}^{2}+{y}^{2}.
$$
Both curves are shown in Figure \ref{fig:lemniscatas}. 
\begin{figure}
\centering
\begin{subfigure}[b]{0.40\textwidth}
  \centering
  \includegraphics[scale=0.33]{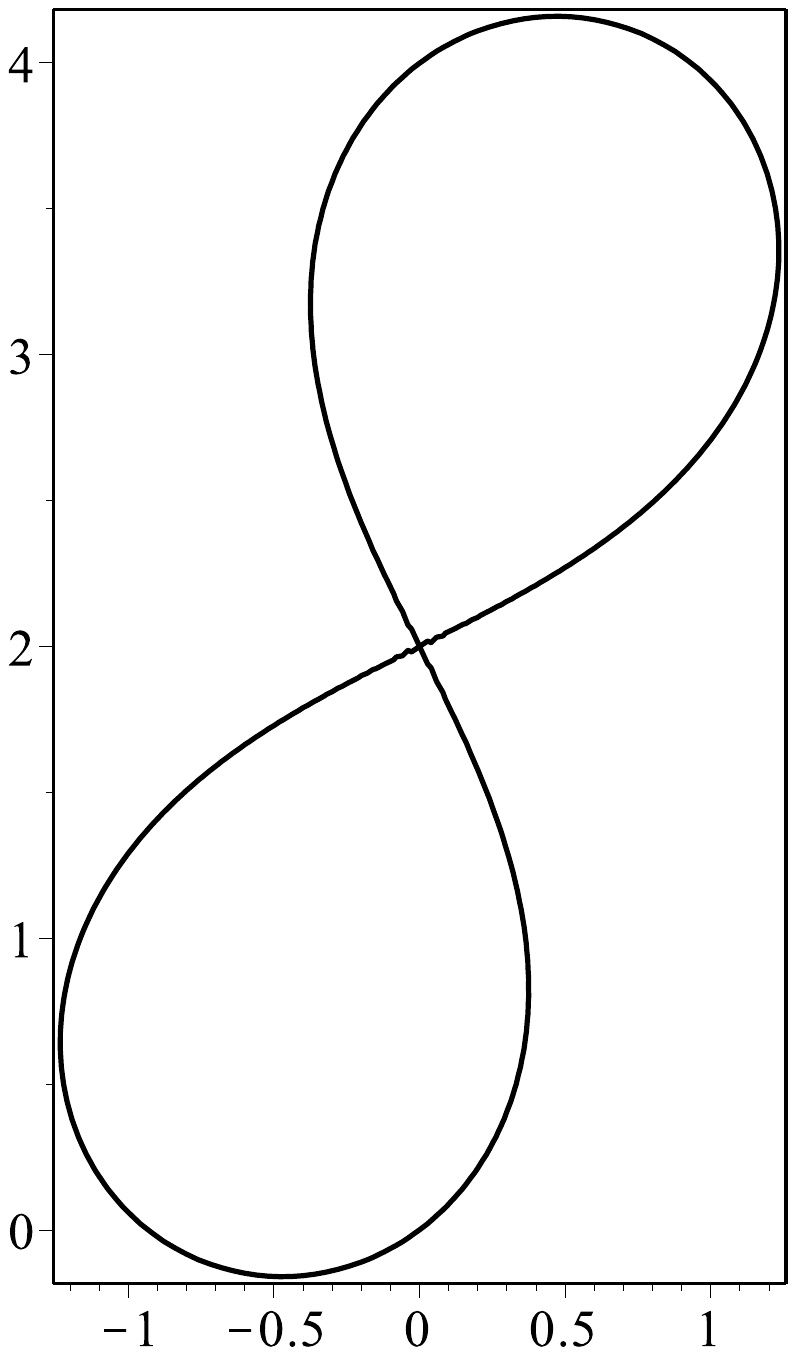}
\end{subfigure}%
\quad
\begin{subfigure}[b]{0.40\textwidth}
  \centering
  \includegraphics[scale=0.33]{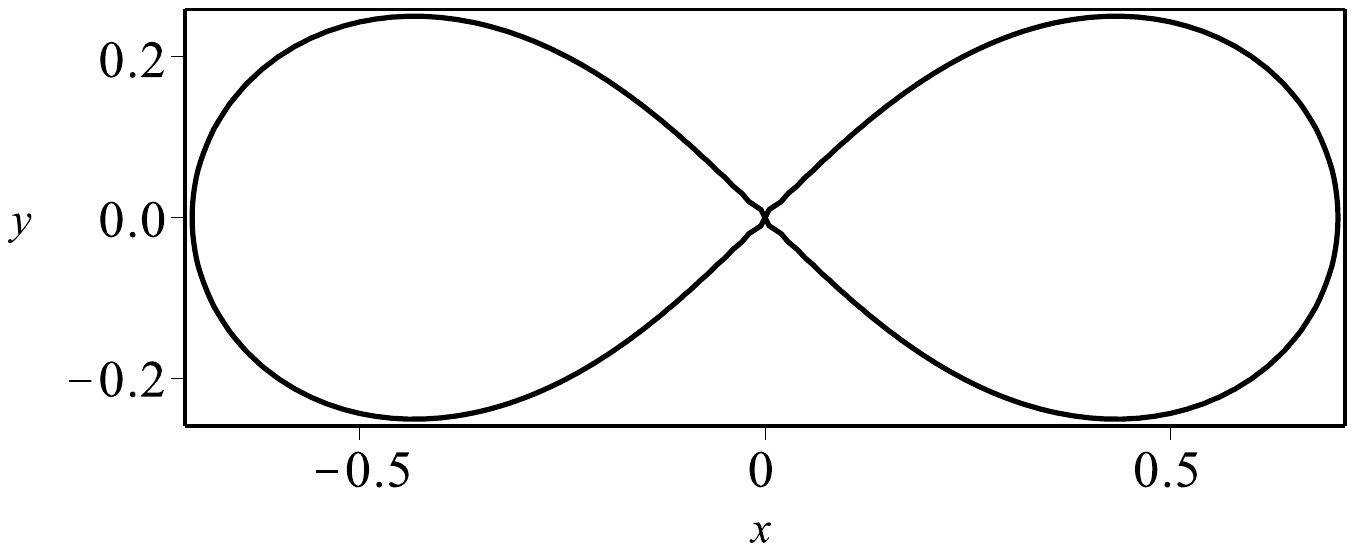}
\end{subfigure}%
\vspace{-3cm}
\caption{A lemniscate  $\CCC_1$ (left) and a related lemniscate $\CCC_2$ obtained by similarity (right).}\label{fig:lemniscatas}
\end{figure}
In this case we also fall into the general case. However, here $P(t)$ is identically zero. By applying the algorithm {\tt Impl-Sim-General}, we first find two orientation-preserving similarities $h(z)=\bfa z +\bfb$. In terms of $\omega$ and $r$, we get $\bfb= -2\,ir \left( 1+i\omega \right)$ and $\lambda=2\,{r}^{4} \left( 1+i\omega \right) ^{2} \left( 1-i\omega \right) ^{2}$. The first orientation-preserving similarity corresponds to $\omega= - 3$ and $r= 1/10$. Since $\bfa=r(1+ i \, \omega )$,  we have
$$
\bfa = \frac {1}{10}-\frac {3}{10}\,i , \quad \bfb=-\frac {3}{5}- \frac {1}{5}\,i, \quad \lambda=\frac {1}{50}.
$$
The second orientation-preserving similarity corresponds to $\omega=-3$ and $r= -1/10$, and
$$
\bfa = -\frac {1}{10}+\frac {3}{10}\,i , \quad \bfb= \frac {3}{5}+ \frac {1}{5}\,i, \quad \lambda=\frac {1}{50}.
$$
Additionally we find two orientation-reversing similarities $h(z)=\bfa \bar{z} +\bfb$. In terms of $\omega$ and $r$, we have $\bfb= 2\,ir \left( 1+i\omega \right)$ and $\lambda=2\,{r}^{4} \left( 1+i\omega \right) ^{2} \left( 1-i\omega \right) ^{2}$. The first orientation-reversing similarity corresponds to $\omega=  3$ and $r= 1/10$, and 
$$
\bfa = \frac {1}{10} + \frac {3}{10}\,i , \quad \bfb=-\frac {3}{5} + \frac {1}{5}\,i, \quad \lambda=\frac {1}{50}.
$$
The second orientation-reversing similarity corresponds to $\omega= 3$ and $r= -1/10$, and 
$$
\bfa = -\frac {1}{10}-\frac {3}{10}\,i , \quad \bfb= \frac {3}{5} - \frac {1}{5}\,i, \quad \lambda=\frac {1}{50}.
$$
The ratio of all these similarities is equal to $\frac {1}{\sqrt{10}}.$

\subsection{Example III}

Let $\CCC_1,\CCC_2$ be defined by $f(x,y)$ and $g(x,y)$, where
$$
f(x,y)=19\,{x}^{3}+90\,{x}^{2}y-18\,x{y}^{2}+35\,{y}^{3}+51\,{x}^{2}+237\,xy-
90\,{y}^{2}+39\,x+195\,y-1,
$$
and
$$
g(x,y)={x}^{3}+{y}^{3}-3\,yx  \text{ (Descartes' Folium)}.
$$
Both curves are shown in Figure \ref{fig:curvas_fol}.
\begin{figure}
\centering
\begin{subfigure}[b]{0.40\textwidth}
  \centering
  \includegraphics[scale=0.33]{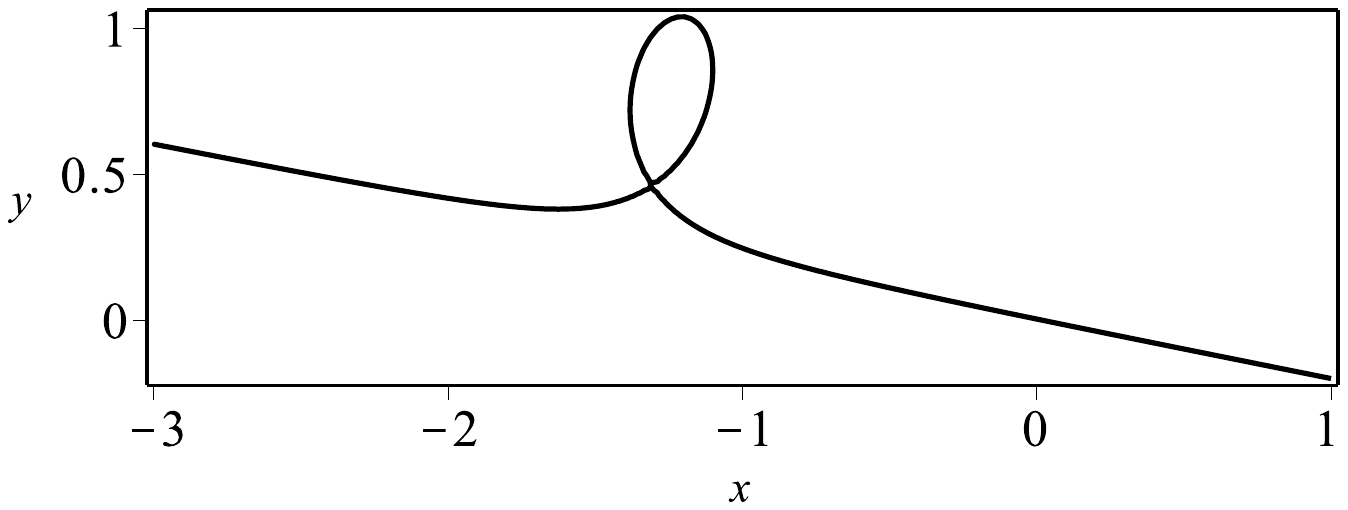}
\end{subfigure}%
\quad
\begin{subfigure}[b]{0.40\textwidth}
  \centering
  \includegraphics[scale=0.33]{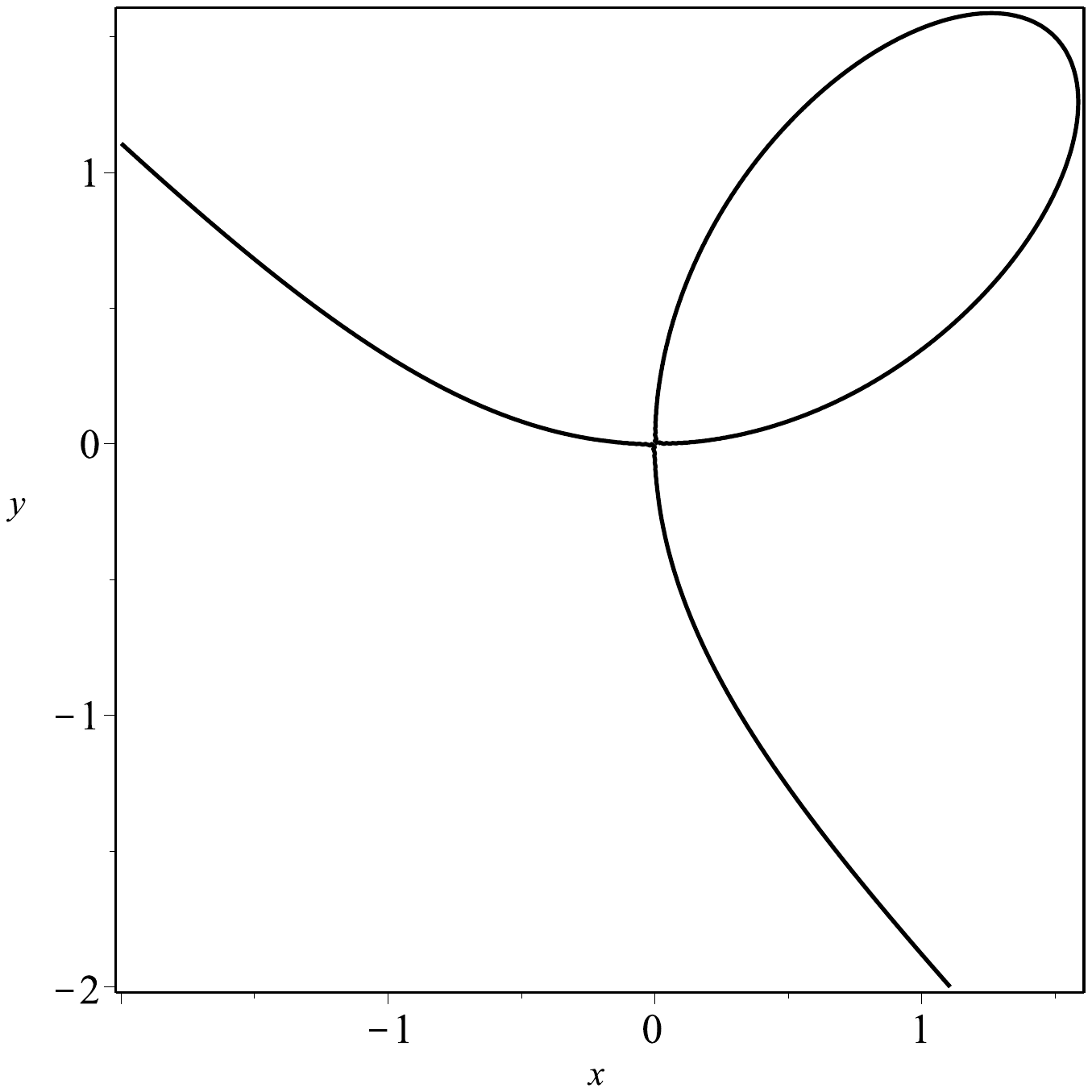}
\end{subfigure}%
\vspace{-3cm}
\caption{The curve $\CCC_1$ (left) and a related curve $\CCC_2$ obtained by similarity (right).}\label{fig:curvas_fol} 
\end{figure}
Using Lemma \ref{lem-no-gen-case}, we observe that we fall into the special case addressed in Section \ref{sec-special}. By applying the algorithm {\tt Impl-Sim-Spec}, we first find an orientation-preserving similarity $h(z)=\bfa z +\bfb$, where $\bfb = 3-4i$, $\bfa = 3-2\,i$, $\lambda=1$. Additionally, we find an orientation-reversing similarity $h(z)=\bfa \bar{z} +\bfb$, where $\bfb= -4+3i $, $\bfa = -2+3\,i$, $\lambda=1$.
 %
The ratio of these similarities is equal to $ \sqrt{13}.$

\subsection{Performance}
Algorithms \texttt{Impl-Sim-General} and \texttt{Impl-Sim-Spec} can be unified into one algorithm that first checks the case we are in, and then executes Algorithms \texttt{Impl-Sim-General} or \texttt{Impl-Sim-Spec} accordingly. We tested the performance of this algorithm on curves of various degrees $d$ and bitsize $\tau$\footnote{The \emph{bitsize} $\tau := \lceil \log_2 k \rceil + 1$ of an integer $k$ is the number of bits needed to represent it. When we refer to the bitsize of an implicit algebraic curve, we imply the maximum of the bitsizes of the absolute values of the coefficients of the implicit equation.} More precisely, for every $d \in \lrb{5..10}$ and $\tau=2^i, i  \in  \lrb{ 0..5}$, we generated 20 random dense bivariate polynomials $g(x,y)$ of degree $d$ and bitsizes bounded by $\tau$. Once generated, we computed $F(z,\bar{z}):=G(\bfa z+\bfb,\bfa\bar{ z}+\bfb)$, with $(\bfa,\bfb)$  a pair of random complex numbers with coefficients in the range $\lrb{-10..10}$. Finally we tested the algorithms with $F(z,\bar{z})$ and $G(z,\bar{z})$. Observe that even if the bitsize of $g(x,y)$ is bounded by $\tau$, the bitsize of $F(z,\bar{z})$ is usually bigger than $\tau$. The code, as well as the random inputs, are given in \cite{gmdt}; the timings are provided in Table \ref{tab:performance_gen}.

\begin{table}
\begin{center}
\begin{tabular*}{\columnwidth}{l@{\extracolsep{\stretch{1}}}*{6}{r}@{}}
\toprule
  t  &  $\tau=1$ &  $\tau=2$ &  $\tau=4$ &  $\tau=8$ & $\tau=16$ & $\tau=32$ \\ 
\midrule
$d=5$  &      0.70 &      0.73 &      0.73 &      0.79 &     0.82 &      2.73 \\ 
$d=6$  &      1.04 &      1.11 &      1.22 &     1.51 &    5.37 &    9.11 \\ 
$d=7$  &      1.84 &     3.24 &     8.10 &    11.10 &    12.76&   14.71 \\ 
$d=8$  &     15.94&     17.80&    20.40 &   21.91 &  25.23 &       32.40    \\ 
$d=9$  &     24.82&    26.86 &   27.79 &  30.75 &       36.20    &    42.15       \\ 
$d=10$  &    35.84 &   38.60 &  41.81 &    47.45       &   52.13        &      72.27     \\ 
\bottomrule
\end{tabular*}
\end{center}
\caption{Average CPU time t (seconds) of the algorithms applied to random dense bivariate polynomials.}\label{tab:performance_gen}
\end{table}

After examining the correlation matrix of the data we observed that the degree highly influences the timing. The influence of the bitsize is weaker. By using Maple's command \texttt{Statistics:- NonlinearFit}, we compared several following multivariate model functions and found that the following function fits our data quite well,  
\begin{eqnarray*}
t&=&- 0.0987387345679027195\,{d}^{3}+ 3.89759623015877255\,{d}^{2}\\
&&+ 0.180291478322672782\,\tau\,d - 33.7316950021062496\,d\\
&&- 0.853707190239283653\,\tau+ 84.1154055239685192.\\
 \end{eqnarray*}
Figure  \ref{grado}  shows the behavior of this model, and compares it with the data collected in our experiments. Notice that this model suggests a time complexity ${\mathcal O}(d^3+d\tau)$ for the algorithm. 
 \begin{figure}[htbp]
\begin{center}
\centerline{$\begin{array}{rl}
  \includegraphics[scale=0.45]{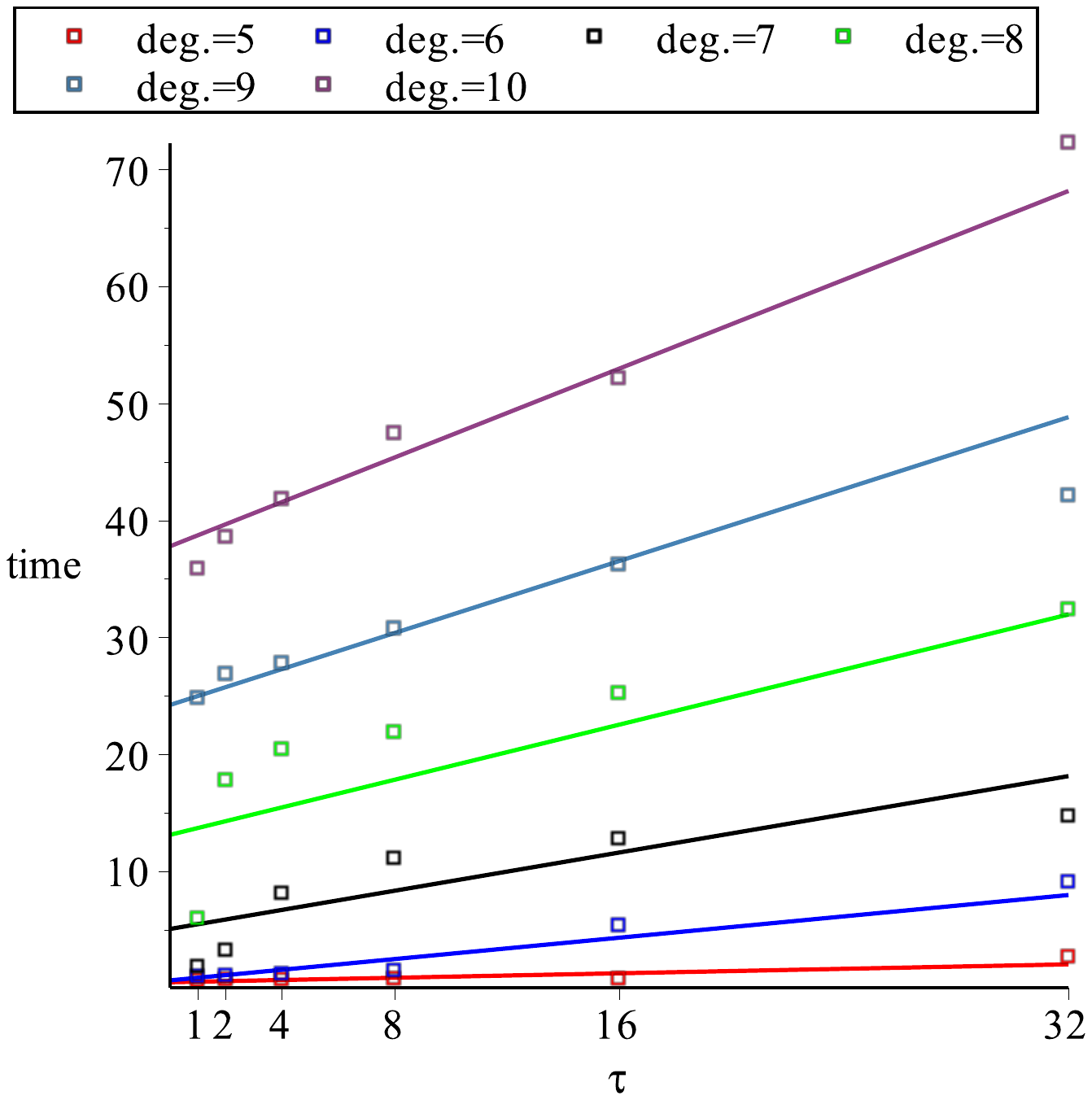} & \hspace{-1cm}\includegraphics[scale=0.45]{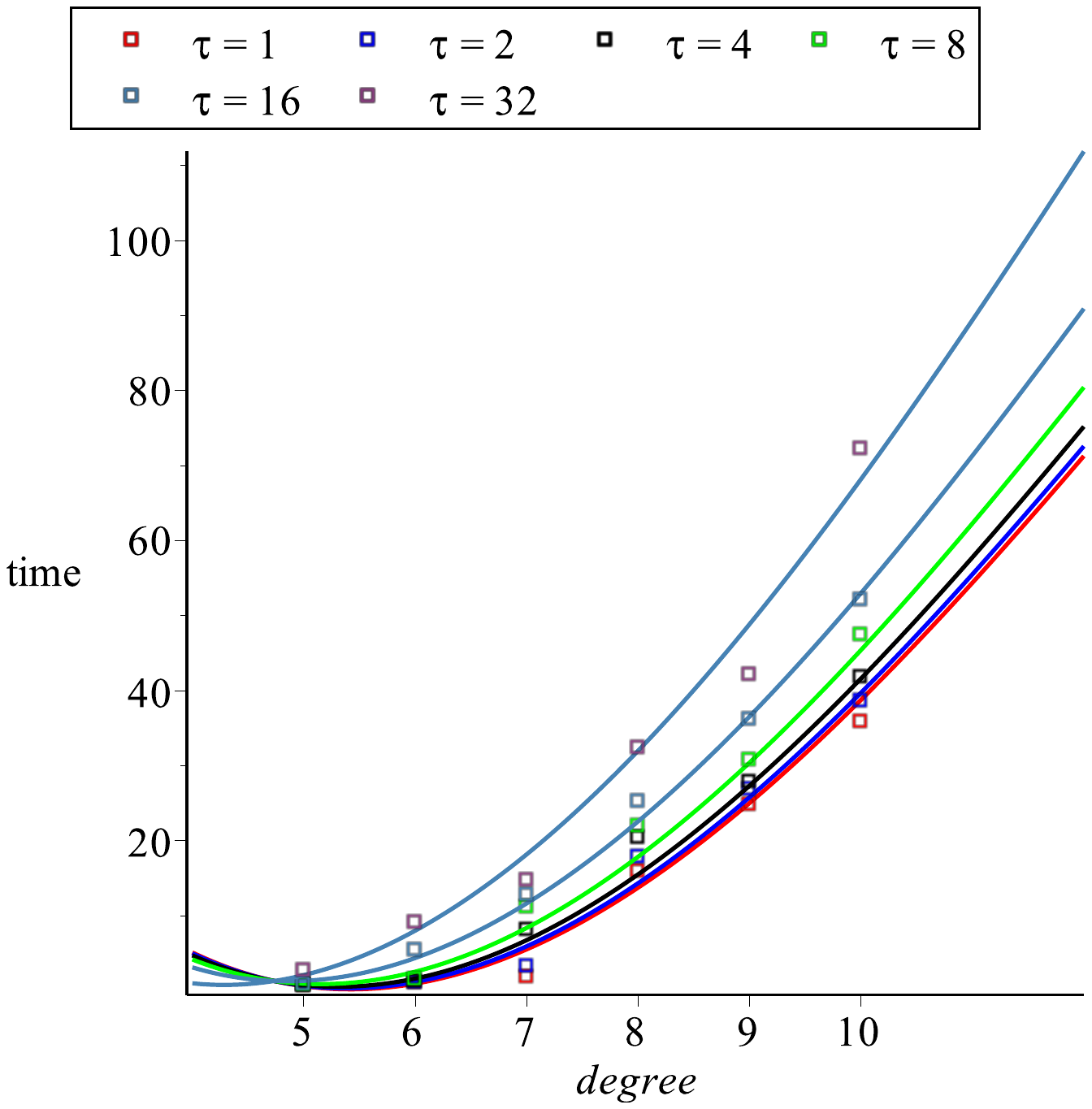} 
  \end{array}$}
  \vspace{-4cm}
\caption{Degrees, bitsizes and timings.}\label{grado}
\end{center}
\end{figure}

We must emphasize that these timings corresponds to \emph{dense} polynomials. Therefore, in practice the performance is better than Table \ref{tab:performance_gen} predicts. In order to illustrate this statement we tested the algorithms with several classical curves, which we list below:
 
\begin{itemize}
\item \emph{Folium of Descartes},
\[ g(x,y)= {x}^{3}-3\,yx+{y}^{3}, \]
\item \emph{Lemniscate of Bernoulli},
\[
g(x,y)=2\,{x}^{4}+4\,{x}^{2}{y}^{2}+2\,{y}^{4}-{x}^{2}+{y}^{2},
\]
\item \emph{Epitrochoid},
\[
g(x,y)={x}^{4}+2\,{x}^{2}{y}^{2}+{y}^{4}-34\,{x}^{2}-34\,{y}^{2}+96\,x-63,
 \]
\item \emph{Offset curve to the cardioid},
\begin{eqnarray*}
g(x,y)&=&{x}^{8}+4\,{x}^{6}{y}^{2}+6\,{x}^{4}{y}^{4}+4\,{x}^{2}{y}^{6}+{y}^{8}+
16\,{x}^{6}y+48\,{x}^{4}{y}^{3}\\
&&+48\,{x}^{2}{y}^{5}+16\,{y}^{7}-140\,{x
}^{6}-324\,{x}^{4}{y}^{2}\\
&&-228\,{x}^{2}{y}^{4}-44\,{y}^{6}-1552\,{x}^{4
}y-2848\,{x}^{2}{y}^{3}\\
&&-1296\,{y}^{5}+2416\,{x}^{4}-864\,{x}^{2}{y}^{2
}-3024\,{y}^{4}+28800\,{x}^{2}y\\
&&+21888\,{y}^{3}+43776\,{x}^{2}+124416\,
{y}^{2}+228096\,y+145152,
\end{eqnarray*}
\item \emph{Hypocycloid},
 \begin{eqnarray*}
g(x,y)&=&{x}^{8}+4\,{x}^{6}{y}^{2}+6\,{x}^{4}{y}^{4}+4\,{x}^{2}{y}^{6}+{y}^{8}12\,{x}^{6}+36\,{x}^{4}{y}^{2}+36\,{x}^{2}{y}^{4}\\
&&+
+12\,{y}^{6}-512\,{x}
^{5}+5120\,{x}^{3}{y}^{2}-2560\,x{y}^{4}+150\,{x}^{4}\\
&&+300\,{x}^{2}{y}^
{2}+150\,{y}^{4}+67500\,{x}^{2}+67500\,{y}^{2}-759375,
 \end{eqnarray*}

\item \emph{4-leaf Rose},
\[
g(x,y)={x}^{6}+3\,{x}^{4}{y}^{2}+3\,{x}^{2}{y}^{4}+{y}^{6}-{x}^{4}+2\,{x}^{2}
{y}^{2}-{y}^{4},
\]
\item \emph{8-leaf Rose},
 \begin{eqnarray*}
g(x,y)&=&{x}^{10}+5\,{x}^{8}{y}^{2}+10\,{x}^{6}{y}^{4}+10\,{x}^{4}{y}^{6}+5\,{x
}^{2}{y}^{8}+{y}^{10}-{x}^{8}+12\,{x}^{6}{y}^{2}\\
&&-38\,{x}^{4}{y}^{4}+12
\,{x}^{2}{y}^{6}-{y}^{8},
 \end{eqnarray*}
\item \emph{12-leaf Rose},
 \begin{eqnarray*}
g(x,y)&=&{x}^{14}+7\,{x}^{12}{y}^{2}+21\,{x}^{10}{y}^{4}+35\,{x}^{8}{y}^{6}+35
\,{x}^{6}{y}^{8}+21\,{x}^{4}{y}^{10}\\
&&+7\,{x}^{2}{y}^{12}+{y}^{14}-{x}^{12}+30\,{x}^{10}{y}^{2}-255\,{x}^{8}{y}^{4}+452\,{x}^{6}{y}^{6}\\
&&-255\,{x}^{4}{y}^{8}+30\,{x}^{2}{y}^{10}-{y}^{12},
 \end{eqnarray*}
\item \emph{16-leaf Rose},
 \begin{eqnarray*}
g(x,y)&=&{x}^{18}+9\,{x}^{16}{y}^{2}+36\,{x}^{14}{y}^{4}+84\,{x}^{12}{y}^{6}+
126\,{x}^{10}{y}^{8}+126\,{x}^{8}{y}^{10}\\
&&+84\,{x}^{6}{y}^{12}+36\,{x}^{4}{y}^{14}+9\,{x}^{2}{y}^{16}+{y}^{18}-{x}^{16}+56\,{x}^{14}{y}^{2}\\
&&-924\,{x}^{12}{y}^{4}+3976\,{x}^{10}{y}^{6}-6470\,{x}^{8}{y}^{8}+3976\,
{x}^{6}{y}^{10}-924\,{x}^{4}{y}^{12}\\
&&+56\,{x}^{2}{y}^{14}-{y}^{16},
\end{eqnarray*}
\item  \emph{20-leaf Rose},
 \begin{eqnarray*}
g(x,y)&=&{x}^{22}+11\,{x}^{20}{y}^{2}+55\,{x}^{18}{y}^{4}+165\,{x}^{16}{y}^{6}+
330\,{x}^{14}{y}^{8}+462\,{x}^{12}{y}^{10}\\
&&+462\,{x}^{10}{y}^{12}+330\,
{x}^{8}{y}^{14}+165\,{x}^{6}{y}^{16}+55\,{x}^{4}{y}^{18}+11\,{x}^{2}{y
}^{20}+{y}^{22}\\
&&-{x}^{20}+90\,{x}^{18}{y}^{2}-2445\,{x}^{16}{y}^{4}+
19320\,{x}^{14}{y}^{6}-63090\,{x}^{12}{y}^{8}\\
&&+92252\,{x}^{10}{y}^{10}-
63090\,{x}^{8}{y}^{12}+19320\,{x}^{6}{y}^{14}-2445\,{x}^{4}{y}^{16}\\
&&+90\,{x}^{2}{y}^{18}-{y}^{20}.
\end{eqnarray*}
\end{itemize}
The timings corresponding to these curves are given in Table \ref{tab:example-curves}.

\begin{table}
\begin{center}
\begin{tabular*}{\columnwidth}{l@{\extracolsep{\stretch{1}}}*{6}{c}@{}}
\toprule
Curve & Descartes'& Bernoulli's & Epitrochoid & Cardioid & Hypocycloid\\
& folium & \!lemniscate\!& & offset & \\
 &
\includegraphics[scale=0.7]{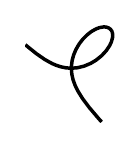}  & \includegraphics[scale=0.7]{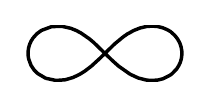} &  \includegraphics[scale=0.7]{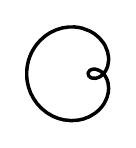} & \includegraphics[scale=0.7]{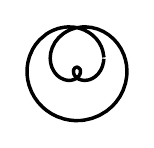} & \includegraphics[scale=0.7]{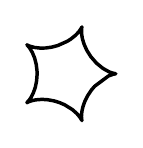} \\
degree   &          3 &                 4 &           4 &               8 &    8 \\
CPU time &       0.46 &              0.39 &        0.46 &       2.62       & 2.37 \\
\midrule
Curve & 4-leaf rose & 8-leaf rose & 12-leaf rose & 16-leaf rose & 20-leaf rose \\
 &
\includegraphics[scale=0.7]{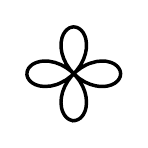} & \includegraphics[scale=0.7]{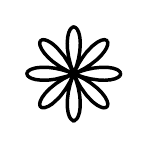} &  \includegraphics[scale=0.7]{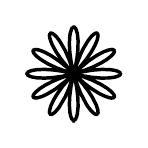}  & \includegraphics[scale=0.7]{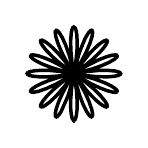} & \includegraphics[scale=0.7]{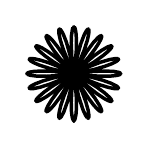} \\ 
degree      & 6 & 10 & 14 & 18 & 22 \\ 
CPU time    & 0.75 & 0.79 &1.56 & 5.67 & 9.85 \\ 
\bottomrule
\end{tabular*}
\end{center}
\caption{Average CPU time (seconds) of the algorithms for well-known curves.}\label{tab:example-curves}
\end{table}

\section*{References}

\end{document}